\newtheorem{theorem}{Theorem}[section]
\newtheorem{corollary}[theorem]{Corollary}
\newtheorem{lemma}[theorem]{Lemma}
\newtheorem{proposition}[theorem]{Proposition}
\newtheorem{definition}[theorem]{Definition}
\newtheorem{remark}[theorem]{Remark}
\numberwithin{equation}{section}
\def\L{\mathcal{L}}
\def\A{\mathcal{A}}
\def\B{\mathcal{B}}
\def\E{\mathcal{E}}
\begin{document}
\title{An improvement on the number of simplices in $\mathbb{F}_q^d$}
\author{
Pham Duc Hiep
\and 
    Thang Pham
  \and
    Le Anh Vinh}
\date{}
\maketitle

\begin{abstract}
Let $\E$ be a set of points in $\mathbb{F}_q^d$. Bennett, Hart, Iosevich, Pakianathan, and Rudnev  (2016) proved that if $|\E|\gg q^{d-\frac{d-1}{k+1}}$ then $\E$ determines a positive proportion of all $k$-simplices.  In this paper, we give an improvement of this result in the case when $\E$ is the Cartesian product of sets. Namely, we show that if $\E$ is the Cartesian product of sets and $q^{\frac{kd}{k+1-1/d}}=o(|\E|)$, the number of congruence classes of $k$-simplices determined by $\E$ is at least $(1-o(1))q^{\binom{k+1}{2}}$, and  in some cases our result is sharp.
\end{abstract}

\section{Introduction}
Let $\mathbb{F}_q$ be a finite field of order $q$ with $q=p^r$ for some prime $p$ and positive integer $r$. Denote by $O(d, \mathbb{F}_q)$ the orthogonal group in $\mathbb{F}_q^d$. We say that two $k$-simplices  in $\mathbb{F}_q^d$ with vertices $(\mathbf{x}_1, \ldots, \mathbf{x}_{k+1})$, $(\mathbf{y}_1, \ldots, \mathbf{y}_{k+1})$ are in the same congruence class if there exist $\theta\in O(d, \mathbb{F}_q)$ and $\mathbf{z}\in \mathbb{F}_q^d$ so that $\mathbf{z}+\theta(\mathbf{x}_i)=\mathbf{y}_i$ for all $i=1, 2, \ldots, k+1$.

Hart and Iosevich \cite{hart1} made the first investigation on counting the number of congruence classes of simplices determined by a point set in $\mathbb{F}_q^d$. More precisely, they proved that if $|\E|\gg q^{\frac{kd}{k+1}+\frac{k}{2}}$ with $d\ge \binom{k+1}{2}$, then $\E$ contains a copy of all $k$-simplices with non-zero edges. Here and throughout,  $X \ll Y$ means that there exists $C>0$ such that $X\le  CY$, and $X=o(Y)$ means that $X/Y\to 0$ as $q\to \infty$, where $X, Y$ are viewed as functions in $q$.

Using methods from spectral graph theory, the third listed author \cite{vinh3} improved this result. In particular, he showed that the same result also holds when $d\ge 2k$ and $|\E|\gg q^{(d-1)/2+k}$. It follows from the results in \cite{hart1, vinh3} that the most difficulties arise when the size of simplex is large with respect to the dimension of the space, for instance, the result in \cite{vinh3} on the number of congruence classes of triangles is only non-trivial if $d\ge 4$. 

In \cite{cover}, Covert et al. addressed the case of triangles in $\mathbb{F}_q^2$, and they established that if $|\E|\gg \rho q^2$, then $\E$ determines at least $c\rho q^3$ congruence classes of triangles. The author of \cite{vinh2} extended this result to the case $d\ge 3$. Formally, he proved that if $|\E|\gg q^{\frac{d+2}{2}}$, then $\E$ determines a positive proportion of all triangles. Using Fourier analytic techniques, Chapman et al. \cite{chapman} indicated that the threshold $q^{\frac{d+2}{2}}$ on the cardinality of $\E$ in the triangle case can be replaced by $q^{\frac{d+k}{2}}$ for the case of $k$-simplices. In a recent result, Bennett et al. \cite{groupaction} improved the threshold $q^{\frac{d+k}2}$ to $q^{d-\frac{d-1}{k+1}}$. The precise statement is given by the following theorem. 
\begin{theorem}[\cite{groupaction}]\label{thm1}
Let $\E$ be a subset in $\mathbb{F}_q^d$. Suppose that 
\[|\E|\gg q^{d-\frac{d-1}{k+1}},\]
then, for $1\le k\le d$, the number of congruence classes of $k$-simplices determined by $\E$ is at least $cq^{\binom{k+1}{2}}$ for some positive constant $c$. 
\end{theorem}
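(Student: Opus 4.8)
The plan is to pass from counting congruence classes to a second-moment (energy) estimate. For a vector of squared distances $\mathbf{t}=(t_{ij})_{1\le i<j\le k+1}$ write $\|\mathbf{x}\|=x_1^2+\cdots+x_d^2$ and let $N(\mathbf{t})$ be the number of ordered tuples $(\mathbf{x}_1,\dots,\mathbf{x}_{k+1})\in\E^{k+1}$ with $\|\mathbf{x}_i-\mathbf{x}_j\|=t_{ij}$ for all $i<j$. Every ordered tuple has a well-defined distance vector, so $\sum_\mathbf{t}N(\mathbf{t})=|\E|^{k+1}$, and Cauchy--Schwarz gives
\[
\#\{\text{classes}\}=\#\{\mathbf{t}:N(\mathbf{t})>0\}\ \ge\ \frac{\big(\sum_\mathbf{t}N(\mathbf{t})\big)^2}{\sum_\mathbf{t}N(\mathbf{t})^2}=\frac{|\E|^{2(k+1)}}{\sum_\mathbf{t}N(\mathbf{t})^2}.
\]
Since over $\mathbb{F}_q$ two nondegenerate $(k+1)$-tuples with equal pairwise distances lie in a common orbit of $G=O(d,\mathbb{F}_q)\ltimes\mathbb{F}_q^d$, the denominator counts ordered pairs of congruent $k$-simplices, and it suffices to prove $\sum_\mathbf{t}N(\mathbf{t})^2\ll |\E|^{2(k+1)}q^{-\binom{k+1}{2}}$.

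Next I would rewrite this energy through the group action. Setting $\lambda(g)=|\E\cap g\E|$, each congruent pair $(\mathbf{x},\mathbf{y})\in(\E^{k+1})^2$ is realized by exactly $|O(d-k,\mathbb{F}_q)|$ elements $g$ with $g\mathbf{x}=\mathbf{y}$ (an isometry fixing $k+1$ affinely independent points is determined by its action on a complementary $(d-k)$-dimensional space), so up to degenerate tuples
\[
\sum_\mathbf{t}N(\mathbf{t})^2=\frac{1}{|O(d-k,\mathbb{F}_q)|}\sum_{g\in G}\lambda(g)^{k+1}.
\]
The first moment is exact: counting pairs of points of $\E$ matched by $g$ gives $\sum_{g}\lambda(g)=|O(d,\mathbb{F}_q)|\,|\E|^2$, so $\lambda$ has mean $\bar\lambda=|\E|^2/q^d$. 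Replacing $\lambda$ by the constant $\bar\lambda$ and using $|O(d,\mathbb{F}_q)|/|O(d-k,\mathbb{F}_q)|\asymp q^{kd-\binom{k+1}{2}}$ reproduces precisely the desired main term $|\E|^{2(k+1)}q^{-\binom{k+1}{2}}$. Thus everything reduces to showing that the higher moment $\sum_g\lambda(g)^{k+1}$ does not exceed a constant multiple of $|G|\bar\lambda^{k+1}$.

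To bound the higher moment I would expand $\lambda(g)=\bar\lambda+E(g)$ and estimate the fluctuation $E$ by Fourier analysis on $\mathbb{F}_q^d$. The essential input is the decay of the sphere $S_t=\{\mathbf{x}:\|\mathbf{x}\|=t\}$, namely $|\widehat{S_t}(\xi)|\ll q^{-(d+1)/2}$ for $t\neq0$, $\xi\neq0$, a standard Gauss--Kloosterman sum bound. Crucially, the reason for working with the group average $\sum_{g\in G}\lambda(g)^{k+1}$ rather than with a pointwise estimate for each individual $N(\mathbf{t})$ is that the errors attached to different distance vectors cannot all be large simultaneously; this averaging is exactly what improves the purely Fourier-analytic threshold $q^{(d+k)/2}$ of Chapman et al. to the threshold claimed here.

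The main obstacle is this fluctuation control: one must show that the contribution of $E$ to the expanded $(k+1)$-st moment stays below the main term $|G|\bar\lambda^{k+1}$. Unwinding the expansion expresses these contributions as a hierarchy of energies of $\E$ with distances pinned along nested subsimplices; the lowest of these is the ordinary distance energy, whose control already requires $|\E|\gg q^{(d+1)/2}=q^{d-(d-1)/2}$ and recovers the case $k=1$. For larger $k$ the full $(k+1)$-st moment must be weighed against the main term, and the cancellation furnished by the sphere decay has to be shared among the $k+1$ vertices of the simplex; carrying out this balance is precisely what forces the hypothesis $|\E|\gg q^{d-(d-1)/(k+1)}$. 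A final, technical point is to confirm that degenerate tuples---those failing to span a $k$-dimensional affine subspace, or lying on a subspace on which the form degenerates---contribute only lower-order terms, so that the clean orbit identity of the second step, and hence the whole count, is unaffected.
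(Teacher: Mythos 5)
Your reduction is precisely the framework of Bennett et al.\ that this paper reproduces in Section \ref{hoho}: Cauchy--Schwarz brings the class count down to the simplex energy $\sum_{\mathbf{t}}N(\mathbf{t})^2$; the orbit--stabilizer identity converts that energy into the moment $\sum_{g\in G}\lambda(g)^{k+1}$ of $\lambda(g)=|\E\cap g\E|$ (the paper's $|w_\theta(\mathbf{z})|$), with the stabilizer size $\asymp|O(d-k,\mathbb{F}_q)|$ accounting for the factor $q^{kd-\binom{k+1}{2}}$; and the mean $\bar\lambda=|\E|^2/q^d$ gives the correct main term. Up to this point, including the caveat about degenerate tuples, the proposal is sound and matches the paper's approach.

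The gap is that the entire content of the theorem --- the bound $\sum_{g}\lambda(g)^{k+1}\le(1+o(1))\,|G|\bar\lambda^{k+1}$ under the stated hypothesis --- is never proved: your final paragraph, culminating in ``carrying out this balance is precisely what forces the hypothesis,'' is a paraphrase of the conclusion, not an argument. Moreover, the route you indicate (multinomial expansion of $(\bar\lambda+E)^{k+1}$ into a ``hierarchy of energies pinned along nested subsimplices'') heads toward controlling higher-order configuration counts, which is essentially the quantity one is trying to bound in the first place. The step you are missing is the one that makes the method close with only \emph{second}-moment information: the elementary inequality of Lemma \ref{bodebennet}, applied with $f=\lambda$, $n=k+1$, together with the trivial bound $\|\lambda\|_{\infty}\le|\E|$, bounds the $(k+1)$-st moment by the main term plus $\asymp k^{2}|\E|^{k-1}$ times the variance $\sum_{g}\bigl(\lambda(g)-\bar\lambda\bigr)^2$; no higher energies ever appear. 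The variance is then tied, by counting stabilizers of nonzero differences (at most $|O(d-1,\mathbb{F}_q)|$ group elements per quadruple), to the distance energy $\sum_{t}\nu_{\E}(t)^2$, estimated by the Fourier-analytic bound $\sum_{t}\nu_{\E}(t)^2\le |\E|^4/q+Cq^{d}|\E|^{2}$; here the term $|O(d-1,\mathbb{F}_q)|\,|\E|^4/q\asymp |O(d,\mathbb{F}_q)|\,|\E|^4/q^{d}$ is absorbed by the subtracted mean square, leaving a variance $\ll |O(d,\mathbb{F}_q)|\,q\,|\E|^2$. The error-to-main ratio is then $\asymp k^{2}q^{kd+1}/|\E|^{k+1}$, which is $o(1)$ exactly when $|\E|\gg q^{(kd+1)/(k+1)}=q^{d-\frac{d-1}{k+1}}$ --- this is where the exponent comes from, and it is also exactly the point where this paper improves matters for Cartesian products by substituting Lemma \ref{mainlm} for the generic energy bound. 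Without the $L^{\infty}$--variance reduction and an actual energy estimate, your sketch has no mechanism that produces the threshold.
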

In this paper, by using methods from spectral graph theory and elementary results on group actions, we improve Theorem \ref{thm1} in the case $\E$ has Cartesian product structure. In particular, we have the following result.
\begin{theorem}\label{main}
Let $\E=\A_1\times \cdots\times A_d$ be a subset in $\mathbb{F}_q^d$. Suppose that 
\[(\min_{1\le i\le d}|\A_i|)^{-1}|\E|^{k+1}\gg q^{kd},\] 
then for $1\le k\le d$, the number of congruence classes of $k$-simplices determined by $\E$ is at least $cq^{\binom{k+1}{2}}$ for some positive constant $c$. 
\end{theorem}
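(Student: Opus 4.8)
The plan is to bound the second moment of the distance distribution and feed it into Cauchy--Schwarz. For a distance type $\mathbf t=(t_{ij})_{1\le i<j\le k+1}\in\mathbb F_q^{\binom{k+1}{2}}$ set
\[ N(\mathbf t)=\#\{(\mathbf x_1,\dots,\mathbf x_{k+1})\in\E^{k+1}:\ \|\mathbf x_i-\mathbf x_j\|^2=t_{ij}\ \text{for all }i<j\},\qquad \Lambda=\sum_{\mathbf t}N(\mathbf t)^2. \]
Restricting to the set $T$ of ordered $(k+1)$-tuples that are affinely independent and span a non-degenerate $k$-flat, Cauchy--Schwarz gives that the number of distance types realised by $T$ is at least $|T|^2/\Lambda$. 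Here the group action enters through Witt's theorem: for $k\le d$ two non-degenerate tuples lie in a common $O(d,\mathbb F_q)$-orbit, up to translation, precisely when they share a distance type, so the number of congruence classes is at least the number of distance types realised by $T$. It remains to check that the degenerate tuples are negligible, i.e.\ $|T|=(1-o(1))|\E|^{k+1}$, which follows once $|\E|$ is large; the number of congruence classes is then at least $(1-o(1))|\E|^{2(k+1)}/\Lambda$, and everything reduces to the upper bound $\Lambda\le(1+o(1))q^{-\binom{k+1}{2}}|\E|^{2(k+1)}$.

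To estimate $\Lambda$ I would detect each equality $\|\mathbf x_i-\mathbf x_j\|^2=t_{ij}$ with a nontrivial additive character $\psi$ of $\mathbb F_q$, obtaining $\Lambda=q^{-\binom{k+1}{2}}\sum_{\mathbf s}|F(\mathbf s)|^2$, where
\[ F(\mathbf s)=\sum_{(\mathbf x_1,\dots,\mathbf x_{k+1})\in\E^{k+1}}\psi\Big(\sum_{i<j}s_{ij}\|\mathbf x_i-\mathbf x_j\|^2\Big),\qquad \mathbf s=(s_{ij}). \]
The key point, and the only place the product structure is used, is that the quadratic phase splits across coordinates: since $\|\mathbf x\|^2=\sum_\ell x_\ell^2$ and $\E=\A_1\times\cdots\times\A_d$, one gets the factorisation $F(\mathbf s)=\prod_{\ell=1}^d F_\ell(\mathbf s)$ with $F_\ell(\mathbf s)=\sum_{(a_1,\dots,a_{k+1})\in\A_\ell^{k+1}}\psi\big(\sum_{i<j}s_{ij}(a_i-a_j)^2\big)$. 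The term $\mathbf s=\mathbf 0$ contributes $F(\mathbf 0)=|\E|^{k+1}$, giving the main term $q^{-\binom{k+1}{2}}|\E|^{2(k+1)}$, so the task becomes the error bound $\sum_{\mathbf s\ne\mathbf 0}\prod_{\ell=1}^d|F_\ell(\mathbf s)|^2=o\big(|\E|^{2(k+1)}\big)$.

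The main obstacle is this last estimate, which is where spectral/Gauss-sum input and the quantity $\min_\ell|\A_\ell|$ appear. For $\mathbf s\ne\mathbf 0$ the form $Q_{\mathbf s}(a_1,\dots,a_{k+1})=\sum_{i<j}s_{ij}(a_i-a_j)^2$ has a well-defined rank $r(\mathbf s)\in\{1,\dots,k\}$ on the difference space $\mathbb F_q^{k+1}/\langle\mathbf 1\rangle$, and each $F_\ell(\mathbf s)$ is an incomplete Gauss sum for $Q_{\mathbf s}$ over $\A_\ell^{k+1}$. I would stratify the sum over $\mathbf s$ by $r(\mathbf s)$---using that the squared-edge forms span all symmetric forms, so that the number of $\mathbf s$ of rank $r$ is governed by the rank-$r$ symmetric matrix variety---and on each stratum combine a per-coordinate cancellation estimate (a completing-the-square bound for the one-variable sums $\sum_{a\in\A_\ell}\psi(\alpha a^2+\beta a)$) with the exact second-moment identity $\sum_{\mathbf s}|F_\ell(\mathbf s)|^2=q^{\binom{k+1}{2}}\#\{(\mathbf a,\mathbf b)\in\A_\ell^{k+1}\times\A_\ell^{k+1}:(a_i-a_j)^2=(b_i-b_j)^2\ \forall i<j\}$ applied to the coordinate of smallest size. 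The target is
\[ \sum_{\mathbf s\ne\mathbf 0}\prod_{\ell=1}^d|F_\ell(\mathbf s)|^2\ \ll\ q^{kd}\big(\min_{1\le\ell\le d}|\A_\ell|\big)\prod_{\ell=1}^d|\A_\ell|^{k+1}, \]
and dividing by $q^{\binom{k+1}{2}}$ shows that this error beats the main term exactly when $(\min_\ell|\A_\ell|)^{-1}|\E|^{k+1}\gg q^{kd}$, which is the hypothesis. I expect the delicate part to be reconciling the pointwise Gauss-sum losses, which are wasteful for structured $\A_\ell$, with the averaged identity, so that the worst case---carried by the smallest factor and by the low-rank strata---produces exactly the factor $\min_\ell|\A_\ell|$; as a consistency check, when $k=1$ the estimate should reduce to the known sharp distance bound for Cartesian products with threshold $|\E|^2\gg q^d\min_\ell|\A_\ell|$.
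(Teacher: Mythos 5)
Your reduction is set up correctly: lower bounding congruence classes by realized distance types, Cauchy--Schwarz, the orthogonality identity $\Lambda=q^{-\binom{k+1}{2}}\sum_{\mathbf s}|F(\mathbf s)|^2$, and the factorization $F(\mathbf s)=\prod_\ell F_\ell(\mathbf s)$ are all sound (in fact your detour through Witt's theorem and the non-degenerate set $T$ is unnecessary, since each congruence class carries a single distance type, so classes always outnumber realized types). The genuine gap is that the error estimate $\sum_{\mathbf s\ne\mathbf 0}\prod_{\ell=1}^d|F_\ell(\mathbf s)|^2\ll q^{kd}\bigl(\min_\ell|\mathcal A_\ell|\bigr)|\mathcal E|^{k+1}$ is not proved: it is stated as a ``target,'' and it \emph{is} the entire content of the theorem (it is equivalent, via your own Parseval identity, to the paper's bound $\sum_{\mathbb D}\mu(\mathbb D)^2\le|\mathcal E|^{2k+2}q^{-\binom{k+1}{2}}+2k(k-1)q^{kd-\binom{k+1}{2}}|\mathcal E|^{k+1}|\mathcal A_d|$). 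Moreover, the method you sketch for it fails as stated for $k\ge 2$. Take $\mathbf s$ supported on the single edge $\{1,2\}$, so $F_\ell(\mathbf s)=|\mathcal A_\ell|^{k-1}\sum_{a,b\in\mathcal A_\ell}\psi\bigl(s_{12}(a-b)^2\bigr)$. Even in the most favorable case $\mathcal A_\ell=\mathbb F_q$ for $\ell\ne\ell_0$, where the pointwise Gauss-sum bound gives $|F_\ell(\mathbf s)|^2=q^{2k+1}$, combining these pointwise bounds with the \emph{full} second-moment identity $\sum_{\mathbf s}|F_{\ell_0}(\mathbf s)|^2=q^{\binom{k+1}{2}}M_{\ell_0}$ (where $M_{\ell_0}\approx|\mathcal A_{\ell_0}|^{k+2}$ for, say, an arithmetic progression) yields a bound exceeding your target by the factor $q^{\binom{k+1}{2}-k}=q^{k(k-1)/2}$, which is unbounded for $k\ge2$. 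The defect is structural: Parseval over all of $\mathbb F_q^{\binom{k+1}{2}}$ is hopelessly wasteful on a low-dimensional stratum, so you would need second-moment identities \emph{restricted} to each stratum. For edge-supported frequencies that restricted identity is exactly the $k=1$ statement, but for full-support low-rank frequencies (these exist: e.g.\ for $k=2$, $s_{12}=s_{13}=1$, $s_{23}=-1/2$ gives a rank-one form $2(a_1-\tfrac12 a_2-\tfrac12 a_3)^2$) there is no clean restricted identity, and pointwise ``completing-the-square'' bounds for $\sum_{a\in\mathcal A_\ell}\psi(\alpha a^2+\beta a)$ give no cancellation for arbitrary structured $\mathcal A_\ell$. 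So the part you defer as ``delicate'' is not bookkeeping; it is the theorem.

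It is instructive to compare with how the paper sidesteps this. The paper never touches the $\binom{k+1}{2}$-dimensional frequency space. Instead, it uses the group-action framework: with $w_\theta(\mathbf z)$ counting pairs of $\mathcal E$ matched by the rigid motion $(\theta,\mathbf z)$, one has $\sum_{\mathbb D}s(\mathbb D)\mu(\mathbb D)^2\le\sum_{\theta,\mathbf z}|w_\theta(\mathbf z)|^{k+1}$ as in (\ref{hai}), and the moment inequality of Lemma \ref{bodebennet} collapses this $(k+1)$-st moment to the \emph{second} moment $\sum_{\theta,\mathbf z}|w_\theta(\mathbf z)|^2$, which (up to stabilizer factors) is the distance-quadruple count $\sum_t\nu_{\mathcal E}(t)^2$. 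Thus the product structure is needed only for the $k=1$ estimate (Lemma \ref{mainlm}, proved with the Erd\H{o}s--R\'enyi graph and the mixing Lemma \ref{edge} rather than Gauss sums), and the stabilizer bound $s(\mathbb D)\le|O(d-k,\mathbb F_q)|$ converts the saving into the exponent $\binom{k+1}{2}$. Your closing consistency check---that at $k=1$ your target should match the sharp distance bound---was exactly the right instinct; the idea you are missing is that group actions let you reduce every $k$ to that one case, instead of attacking all ranks of the frequency space directly.
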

\begin{corollary}\label{htv}
Let $\E=\A^d$ be a subset in $\mathbb{F}_q^d$. If $|\E|\gg q^{\frac{kd}{k+1-1/d}}$ then the number of congruence classes of simplices determined by $\E$ is at least $cq^{\binom{k+1}{2}}$ for some positive constant $c$.
\end{corollary}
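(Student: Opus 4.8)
The plan is to derive this statement directly from Theorem \ref{main}, since the set $\E=\A^d$ is the special case of a Cartesian product in which all $d$ factors coincide. The entire task reduces to checking that the cardinality hypothesis of the corollary is precisely the specialization of the hypothesis of Theorem \ref{main} to this symmetric situation, so I do not expect any genuine obstacle; the only point requiring care is a positivity check on an exponent.

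First I would record the elementary relations forced by the product structure. Writing $\E=\A\times\cdots\times\A$ with $d$ equal factors, every factor has the same size, so $\min_{1\le i\le d}|\A_i|=|\A|$ and $|\E|=|\A|^d$. In particular $|\A|=|\E|^{1/d}$, which lets me eliminate $|\A|$ in favour of $|\E|$.

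Next I would substitute these relations into the hypothesis of Theorem \ref{main}. A direct computation gives
\[(\min_{1\le i\le d}|\A_i|)^{-1}|\E|^{k+1}=|\A|^{-1}|\E|^{k+1}=|\E|^{-1/d}|\E|^{k+1}=|\E|^{k+1-1/d},\]
so the condition $(\min_{1\le i\le d}|\A_i|)^{-1}|\E|^{k+1}\gg q^{kd}$ is literally the condition $|\E|^{k+1-1/d}\gg q^{kd}$. Since the exponent $k+1-1/d$ is strictly positive for all $d\ge 1$ and $k\ge 1$, I may take the $(k+1-1/d)$-th root without reversing the inequality and obtain the equivalent form $|\E|\gg q^{\frac{kd}{k+1-1/d}}$, which is exactly the hypothesis of the corollary.

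With the two thresholds matched, Theorem \ref{main} applies verbatim and yields the stated lower bound $cq^{\binom{k+1}{2}}$ on the number of congruence classes. The mildest point deserving attention is the strict positivity of $k+1-1/d$ invoked when passing between the two forms of the threshold; beyond that, the argument is nothing more than the substitution $|\A|=|\E|^{1/d}$ together with an appeal to the main theorem.
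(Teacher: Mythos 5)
Your proposal is correct and is exactly the argument the paper intends: the corollary is stated as an immediate specialization of Theorem \ref{main}, obtained by setting $\A_i=\A$ for all $i$, so that $\min_i|\A_i|=|\A|=|\E|^{1/d}$ and the hypothesis becomes $|\E|^{k+1-1/d}\gg q^{kd}$, i.e.\ $|\E|\gg q^{\frac{kd}{k+1-1/d}}$. The paper gives no separate proof, and your substitution plus the positivity check on the exponent $k+1-1/d$ is all that is needed.
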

As a consequence of Corollary \ref{htv}, we recover the following result in \cite{hieu}.
\begin{theorem}
Let $\mathcal{A}$ be a subset in $\mathbb{F}_q$. If $|\mathcal{A}|\gg q^{\frac{d^2}{2d-1}}$, then the number of distinct distances determined by points in $\mathcal{A}^d\subseteq \mathbb{F}_q^d$ is at least $\gg q$.
\end{theorem}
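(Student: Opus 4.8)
The plan is to read this statement off from Corollary~\ref{htv} in the single case $k=1$, after translating distances into the language of congruence classes of $1$-simplices. A $1$-simplex is an ordered pair of points $(\mathbf{x}_1,\mathbf{x}_2)$, and since a congruence is allowed to apply an arbitrary translation $\mathbf{z}$, such a pair is determined up to congruence by the difference vector $\mathbf{v}=\mathbf{x}_1-\mathbf{x}_2$ together with its $O(d,\mathbb{F}_q)$-orbit. Thus the first thing I would do is describe the orbits of $O(d,\mathbb{F}_q)$ acting on $\mathbb{F}_q^d$.

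The key step is Witt's extension theorem, which for $d\ge 2$ guarantees that $O(d,\mathbb{F}_q)$ acts transitively on the vectors of any fixed nonzero norm $\|\mathbf{v}\|=v_1^2+\cdots+v_d^2$. Consequently two $1$-simplices of nonzero length are congruent if and only if their endpoints realize the same distance, so the number of congruence classes of such $1$-simplices determined by $\mathcal{A}^d$ equals the number of distinct nonzero distances it determines. With this dictionary in hand I would apply Corollary~\ref{htv} with $\E=\mathcal{A}^d$ and $k=1$: its hypothesis $|\E|\gg q^{\frac{kd}{k+1-1/d}}$ specializes to $|\E|=|\mathcal{A}|^d\gg q^{\frac{d^2}{2d-1}}$, i.e.\ $|\mathcal{A}|\gg q^{\frac{d}{2d-1}}$ after taking $d$-th roots, while its conclusion produces at least $cq^{\binom{2}{2}}=cq$ congruence classes, hence $\gg q$ distinct distances.

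The only nonroutine ingredient is the orbit description of the previous paragraph; the remainder is bookkeeping. The one point to watch is the contribution of isotropic vectors, that is, nonzero $\mathbf{v}$ with $\|\mathbf{v}\|=0$, on which transitivity can fail; these all correspond to the single distance value $0$ and form only $O(1)$ orbits, so discarding them alters the count of congruence classes by a bounded amount and leaves the lower bound of order $q$ untouched. Finally I would check that the degenerate $1$-simplices set aside in Corollary~\ref{htv} are exactly those of zero length, so that the two counts align and the bound $\gg q$ transfers with no loss.
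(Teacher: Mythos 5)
Your proposal is correct and is exactly the paper's route: the paper presents this theorem as an immediate consequence of Corollary~\ref{htv} with $k=1$ (no separate proof is given), and your Witt-theorem dictionary between congruence classes of $1$-simplices and distinct distances, with the $O(1)$ correction for isotropic difference vectors, is precisely the bookkeeping the paper leaves implicit. Note that your reading of the hypothesis as $|\mathcal{A}|^d \gg q^{\frac{d^2}{2d-1}}$, i.e.\ $|\mathcal{A}| \gg q^{\frac{d}{2d-1}}$, is the intended one, since the condition as literally stated on $|\mathcal{A}|$ itself would be vacuous for $d\ge 2$ (and in any case implies the hypothesis of Corollary~\ref{htv}, so your argument goes through either way).
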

\paragraph{On the number of congruence classes of triangles in $\mathbb{F}_q^2$.}
For the case of triangles in $\mathbb{F}_q^2$, in 2012 Bennett, Iosevich, and Pakianathan \cite{be}, using Elekes-Sharir paradigm and an estimate on the number of incidences between points and lines in $\mathbb{F}_q^3$, improved significantly the result in \cite{cover}. In particular, they proved that if $|\E|\gg q^{7/4}$ and $q\equiv 3\mod 4$, then the number of triangles determined by $\E$ is at least $cq^3$ for some positive constant $c$.  The authors of \cite{groupaction} recently improved the exponent $7/4$ to $8/5$ in the following.
\begin{theorem}[Bennett et al. \cite{groupaction}]\label{tamgiac}
Let $\E$ be a set of points in $\mathbb{F}_q^2$. If $|\E|\gg q^{8/5}$, then $\E$ determines a positive proportion of all triangles.
\end{theorem}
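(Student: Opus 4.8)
The plan is to realise congruence as orbits of the rigid-motion group $G=O(2,\mathbb{F}_q)\ltimes\mathbb{F}_q^2$ acting diagonally on triples of points, and to lower-bound the orbit count by a second-moment (energy) argument. For $g\in G$ set $\lambda(g)=|\{x\in\E:gx\in\E\}|$ (abusing notation, $\E(\cdot)$ will also denote the indicator of $\E$). A non-degenerate planar triangle has trivial stabiliser, so a congruent pair determines a unique $g$ and its three vertices are chosen independently; hence the number of ordered pairs of congruent non-degenerate triangles equals $W:=\sum_{g\in G}\lambda(g)^3$. Discarding the $O(q|\E|^2)$ degenerate (collinear or isotropic-edge) triangles, which is harmless once $|\E|\gg q$, Cauchy--Schwarz yields
\[
\#\{\text{congruence classes}\}\ \gg\ \frac{|\E|^6}{W}.
\]
Thus it suffices to prove the matching upper bound $W\ll |\E|^6/q^3$; this is the smallest value $W$ can take, since there are only $\Theta(q^3)$ classes, so the whole content is that $\E$ is pseudorandom enough for the cube-sum to remain at its minimum.

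Next I would write $g=(\theta,w)$ with $\theta\in O(2,\mathbb{F}_q)$ and $w\in\mathbb{F}_q^2$, and expand each slice by Fourier analysis on $\mathbb{F}_q^2$. Here $\lambda(\theta,w)=\sum_x\E(x)\E(\theta x+w)$ has, as a function of $w$, Fourier coefficients $\widehat{\E}(\xi)\,\widehat{\E}(\theta^{-1}\xi)$, where $\widehat{\E}$ is the standard additive-character transform. Splitting off the mean $M=|\E|^2/q^2$ and using $|O(2,\mathbb{F}_q)|\asymp q$, the constant ($\xi=0$) part contributes exactly the main term $\asymp|\E|^6/q^3$. The translation slice $\theta=I$ is an additive third energy, $\sum_w|\E\cap(\E-w)|^3\le|\E|\sum_w|\E\cap(\E-w)|^2\le|\E|^4$, which is $\ll|\E|^6/q^3$ already when $|\E|\gg q^{3/2}$; the same threshold controls the quadratic (variance) term of every slice, since by Parseval $\sum_\theta|\widehat{\E}(\theta^{-1}\xi)|^2$ is bounded by summing $|\widehat{\E}|^2$ over the sphere of radius $|\xi|$. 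So neither the translations nor the second-order fluctuations are the bottleneck.

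The genuine difficulty is the third-moment remainder $R:=\sum_{\theta}\sum_{w}\big(\lambda(\theta,w)-M\big)^3$, which unfolds to
\[
R=\frac{1}{q^4}\sum_{\substack{\xi_1+\xi_2+\xi_3=0\\ \xi_i\neq 0}}\Big(\prod_{i=1}^3\widehat{\E}(\xi_i)\Big)\sum_{\theta\in O(2,\mathbb{F}_q)}\prod_{i=1}^3\widehat{\E}(\theta^{-1}\xi_i).
\]
All the cancellation lives in the inner average over the orthogonal group, which is governed by the Fourier transform of spheres in $\mathbb{F}_q^2$ and hence by the classical Kloosterman-sum bound $\ll q^{1/2}$. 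Estimating $R$ crudely by $|R|\le(\max_{\theta,w}|\lambda(\theta,w)-M|)\sum_{\theta,w}(\lambda(\theta,w)-M)^2\le|\E|\cdot q^2|\E|^2$ gives only $W\ll|\E|^6/q^3+q^2|\E|^3$, i.e.\ the weaker threshold $|\E|\gg q^{5/3}$; extracting the square-root cancellation from the $\theta$-sum is precisely what lowers the exponent to $8/5$. Equivalently one may run the Elekes--Sharir encoding behind the earlier $7/4$ bound: each ordered pair $(p,p')\in\E^2$ becomes a line in the three-dimensional parameter space of rigid motions, $\lambda(g)$ counts the lines through $g$, and $W$ becomes a count of concurrent line-triples controlled by a point--line incidence bound in $\mathbb{F}_q^3$, the improved incidence input again yielding $8/5$. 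I expect the main obstacle to be exactly this step, namely bounding the contribution of the atypically rich rigid motions---those preserving an unusually large portion of $\E$---which corresponds to sphere Fourier decay on the analytic side and to rich points of the line configuration on the incidence side. The reflection component ($\det\theta=-1$, where $I-\theta$ is singular and the fixed set is a line rather than a point) does not fit the rotation computation verbatim, but is handled by the analogous one-dimensional averaging.
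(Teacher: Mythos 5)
Your reduction is sound as far as it goes, and it coincides with the paper's own framework: Cauchy--Schwarz gives $\#\{\text{classes}\}\gg |\E|^6/W$ with $W=\sum_{g}\lambda(g)^3$ (this is exactly the paper's $\sum_{\theta,\mathbf{z}}|w_\theta(\mathbf{z})|^3$ in (\ref{hai})), so the whole theorem is equivalent to proving $W\ll |\E|^6/q^3$ when $|\E|\gg q^{8/5}$. But at precisely that point your argument stops. You correctly compute that the crude estimate $|R|\le \|\lambda-M\|_\infty\sum_g(\lambda(g)-M)^2$, combined with the standard distance-energy bound, yields only the threshold $q^{5/3}$ --- which is nothing more than Theorem \ref{thm1} specialized to $d=k=2$ --- and then you merely assert that ``extracting the square-root cancellation from the $\theta$-sum'' or, alternatively, an Elekes--Sharir encoding with an ``improved incidence input,'' would lower the exponent to $8/5$, carrying out neither. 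That step is not a technicality; it is the entire content of the theorem. Neither suggested route is supported by a computation showing it produces $8/5$: the Elekes--Sharir paradigm is exactly what gave the weaker exponent $7/4$ in \cite{be}, and you do not identify the stronger point--line incidence bound in $\mathbb{F}_q^3$ that would improve it, nor do you show that Kloosterman-type decay in the orthogonal-group average closes the gap.

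Moreover, your diagnosis of where the gain must come from points away from what actually works. The paper extracts no cancellation whatsoever from the third-moment remainder: it keeps the crude convexity bound (Lemma \ref{bodebennet} with $n=3$, i.e.\ cube sum $\le$ main term $+\,3\|f\|_\infty\sum_{g}(\lambda(g)-M)^2$) and instead improves the bound on the \emph{variance} itself. The variance is controlled, via the stabilizer argument in (\ref{hai*}), by the distance energy $\sum_{\lambda\in\mathbb{F}_q^*}\nu_\E(\lambda)^2$, and the decisive new ingredient is Proposition \ref{pro2}: $\sum_{\lambda\ne 0}\nu_\E(\lambda)^2\ll |\E|^4/q+q|\E|^{5/2}$, proved by converting hinges into incidences between $\E$ and the multiset of perpendicular bisector lines (Lemma \ref{1172016} plus the Erd\H{o}s--R\'enyi graph, Lemmas \ref{edge} and \ref{erdosrenyi}), with the bisector multiplicities bounded through the reflection graph of Hanson et al.\ (Lemma \ref{reflection}). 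Feeding this into the crude bound gives an error term $\ll |\E|\cdot q|\E|^{5/2}=q|\E|^{7/2}$, which is $o(|\E|^6/q^3)$ exactly when $|\E|\gg q^{8/5}$. In particular, your remark that the ``second-order fluctuations are not the bottleneck'' (being controlled at threshold $q^{3/2}$) is misleading: that is true only for the variance weighted by the mean $M=|\E|^2/q^2$, whereas the same variance reappears in the cubic remainder weighted by $\|f\|_\infty\approx|\E|$, and there the standard energy bound $q^2|\E|^2$ is precisely what creates the $q^{5/3}$ barrier. The theorem is proved by sharpening that energy estimate, and this is the ingredient your proposal is missing.
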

We will give a graph-theoretic proof for this theorem in Section $4$. If $\E$ has Cartesian product structure of sets with different sizes, as a consequence of Theorem \ref{main}, we are able to obtain a much stronger result as follows.
\begin{theorem}\label{tot}
Let $\A, \B$ be subsets in $\mathbb{F}_q$. If $|\A|\ge q^{\frac{1}{2}+\epsilon}$ and $|\B|\ge q^{1-\frac{2\epsilon}{3}}$ for some $\epsilon\ge 0$, then the number of congruence classes of triangles determined by $\A\times \B\subseteq\mathbb{F}_q^2$ is at least $cq^3$ for some positive constant $c$. 
\end{theorem}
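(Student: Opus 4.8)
The plan is to derive Theorem \ref{tot} as a direct special case of Theorem \ref{main} in dimension $d=2$ with $k=2$, so that a triangle is exactly a $2$-simplex and $\binom{k+1}{2}=3$, matching the desired count $cq^3$. First I would set $\E=\A\times\B$, so that $|\E|=|\A||\B|$, and write out the hypothesis of Theorem \ref{main} in this case: it requires $\min(|\A|,|\B|)^{-1}|\E|^{k+1}\gg q^{kd}$, which here reads $\dfrac{(|\A||\B|)^3}{\min(|\A|,|\B|)}\gg q^4$. Everything then reduces to verifying this single inequality under the stated size assumptions $|\A|\ge q^{1/2+\epsilon}$ and $|\B|\ge q^{1-2\epsilon/3}$.

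The key algebraic observation I would use is the identity $\dfrac{ab}{\min(a,b)}=\max(a,b)$, valid for positive reals, which lets me rewrite the left-hand side cleanly as $(|\A||\B|)^2\max(|\A|,|\B|)$. This reformulation is what avoids an awkward case distinction over which of $|\A|,|\B|$ is smaller. I would then bound the two factors separately: from the hypotheses, $(|\A||\B|)^2\ge q^{2(1/2+\epsilon)}\cdot q^{2(1-2\epsilon/3)}=q^{3+2\epsilon/3}$, while $\max(|\A|,|\B|)\ge|\B|\ge q^{1-2\epsilon/3}$. Multiplying these gives $(|\A||\B|)^2\max(|\A|,|\B|)\ge q^{3+2\epsilon/3+1-2\epsilon/3}=q^4$, so the hypothesis of Theorem \ref{main} is met. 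Applying that theorem then immediately yields at least $cq^3$ congruence classes of triangles.

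Since the argument is purely a verification of the threshold, there is no serious obstacle; the only point requiring a little care is the choice to estimate $\max(|\A|,|\B|)$ from below by $|\B|$ rather than by $|\A|$, which is exactly what makes the $\epsilon$-dependent exponents cancel and produces the exponent $4$. It is worth remarking that the cancellation being exact, rather than leaving room to spare, reflects the fact that the family of admissible pairs $(|\A|,|\B|)$ sits precisely on the boundary of the region where Theorem \ref{main} applies, which is what makes the range of allowed $\epsilon$ as large as possible.
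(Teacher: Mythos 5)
Your proposal is correct and follows exactly the route the paper intends: Theorem \ref{tot} is deduced as an immediate consequence of Theorem \ref{main} with $d=k=2$, by checking that $|\A|\ge q^{1/2+\epsilon}$ and $|\B|\ge q^{1-2\epsilon/3}$ force $(\min(|\A|,|\B|))^{-1}(|\A||\B|)^3\ge |\A|^2|\B|^3\ge q^4$, which is precisely the condition the paper records in its remark following the theorem. Your use of the identity $ab/\min(a,b)=\max(a,b)$ is a clean way to avoid the case split, but the underlying verification is the same.
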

Note that if $\A$ and $\B$ are arbitrary sets in $\mathbb{F}_q$, then it follows from Theorem \ref{main} that in order to prove that there exist at least $cq^3$ congruence classes of triangles, we need the condition $|\A|^2|\B|^3\gg q^4$. In particular, if $|\A|<q^{1/2}$ then we must have $|\B|> q$. In fact,  one can not expect to get a positive proportion of congruence of triangles in the set $\A\times \B$ with arbitrary sets $\A$ and $\B$ satisfying  $|\A|=o(q^{1/2})$ and $|\B|<q$, since the authors of \cite{groupaction} gave a construction with $|\A|=q^{1/2-\epsilon'}$ and $|\B|=q$, and the number of congruence classes triangles determined by $\E$ is at most  $cq^{3-\epsilon"}$ for $\epsilon">0$. Therefore the result in the form of Theorem \ref{tot} is tight up to a factor of $q^{\epsilon/3}$.
\paragraph{Distinct distance subsets in $\mathbb{F}_q^2$.}
Given a subset $\E\subset \mathbb{F}_q^d$,  a subset $U\subset \E$ is called a \textit{distinct distance subset} if there are no four distinct points $\mathbf{x}, \mathbf{y}, \mathbf{z}, \mathbf{t}\in U$ such that $||\mathbf{x}-\mathbf{y}||=||\mathbf{z}-\mathbf{t}||$. In \cite{phuong}, Phuong et al. studied the finite field analogue of this problem. More precisely, the authors of \cite{phuong} proved that if $|\E|\ge 2q^{(2d+1)/3}$, then there exists a distinct distance subset of cardinality $\gg q^{1/3}$.  This implies that the result is only non-trivial when $d\ge 3$. In this paper, we fill in this gap. In particular, we prove that if $d=2$, then the threshold $q^{(2d+1)/3}$ can be improved to $q^{4/3}$. In particular, the statement is in the following.
\begin{theorem}\label{khoangcachphanbiet}
Let $\E$ be a subset in $\mathbb{F}_q^d $ with $|\E|\gg q^{4/3}.$ There exists a distinct distance subset $U\subseteq \E$ satisfying $ q^{1/3} \ll |U|\ll q^{1/2}$. 
\end{theorem}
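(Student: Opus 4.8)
The plan is to extract $U$ by random sampling followed by deletion, the quantity to be controlled being the equal-distance energy
\[Q\assign\#\{(\mathbf{x},\mathbf{y},\mathbf{z},\mathbf{t})\in\E^4:\|\mathbf{x}-\mathbf{y}\|=\|\mathbf{z}-\mathbf{t}\|\}.\]
Writing $\nu(s)\assign\#\{(\mathbf{x},\mathbf{y})\in\E^2:\|\mathbf{x}-\mathbf{y}\|=s\}$ we have $Q=\sum_s\nu(s)^2$, and the configurations forbidden in a distinct distance subset are exactly the quadruples counted by $Q$ whose four points are distinct; the degenerate ones, where two points coincide, number only $O(|\E|^3)$ and can be dropped. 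First I would keep each point of $\E$ independently with probability $p$, obtaining $W$ with $\mathbb{E}|W|=p|\E|$ and an expected number of bad quadruples inside $W$ of at most $p^4Q$. Deleting one vertex from each surviving bad quadruple leaves a distinct distance subset $U$ with $\mathbb{E}|U|\ge p|\E|-p^4Q$; taking $p\asymp(|\E|/Q)^{1/3}$, which is at most $1$ since $Q\gg|\E|$, yields $|U|\gg|\E|^{4/3}Q^{-1/3}$.

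The theorem then reduces to the energy estimate $Q\ll|\E|^4/q$ under $|\E|\gg q^{4/3}$: granting it, the construction gives $|U|\gg|\E|^{4/3}(|\E|^4/q)^{-1/3}=q^{1/3}$, the claimed lower bound. Since Cauchy--Schwarz already forces $Q\gg|\E|^4/q$, the estimate is really an upper bound matching the main term $|\E|^4/q$ (two independent segments have equal length with probability about $1/q$), and the whole difficulty is to show the deviation is negligible exactly at $q^{4/3}$. The naive route via expander mixing on the distance graph gives $\nu(s)=|\E|^2/q+O(q^{(d-1)/2}|\E|)$, hence $Q=|\E|^4/q+O(q^{d}|\E|^2)$, which only controls the error for $|\E|\gg q^{(d+1)/2}$, i.e.\ $q^{3/2}$ when $d=2$, and is too weak. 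To reach $q^{4/3}$ in the plane I would instead use the group action: for $d=2$ a pair of congruent nonzero segments is matched by essentially a unique rigid motion in $G=\mathbb{F}_q^2\rtimes O(2,\mathbb{F}_q)$, so $Q$ is comparable to $\sum_{g\in G}|\E\cap g\E|^2=\#\{(\mathbf{x},\mathbf{y},g):\mathbf{x},\mathbf{y},g\mathbf{x},g\mathbf{y}\in\E\}$, whose generic term $|G|(|\E|^2/q^2)^2$ (here $|G|\asymp q^3$) already reproduces $|\E|^4/q$. Parametrizing plane motions so that, for fixed $\mathbf{x},\mathbf{x}'$, the motions carrying $\mathbf{x}$ to $\mathbf{x}'$ form a line in a three-dimensional space, this sum becomes a point--line incidence count in $\mathbb{F}_q^3$, to which the incidence bound of \cite{be} applies.

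I expect this energy estimate to be the main obstacle: one must isolate the true main term from the contribution of the degenerate motions — the identity, the pure translations, and the rotations with large fixed-point sets, all of which inflate $|\E\cap g\E|$ — and then verify that the $\mathbb{F}_q^3$ incidence bound is sharp enough to push the threshold all the way to $q^{4/3}$ instead of stalling at $q^{3/2}$; the surrounding sampling-and-deletion argument is routine. The upper bound $|U|\ll q^{1/2}$ is by contrast the easy direction, as the set built above already has size $\Theta(q^{1/3})\ll q^{1/2}$; the exponent $1/2$ reflects a genuine ceiling, since in any distinct distance subset the pairs realizing a fixed length form an intersecting family of edges (two disjoint such pairs being forbidden), so that $U$ meets each line in a Sidon set and cannot exceed size $O(q^{1/2})$ there.
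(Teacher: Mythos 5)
Your overall skeleton is the same as the paper's: bound the equal\-/distance energy, then extract a large subset avoiding repeated distances, and get the ceiling $|U|\ll q^{1/2}$ from the fact that only $q$ distances exist. Your random sampling plus deletion step is a sound replacement for the paper's invocation of Spencer's theorem (Theorem \ref{spencer}) on the hypergraph of singular quadruples; up to constants the two are interchangeable. The genuine gap is that the one step carrying all the content of the theorem --- the estimate $\sum_{\lambda}\nu_{\E}(\lambda)^2\ll |\E|^4/q$ once $|\E|\gg q^{4/3}$ --- is not proved in your proposal: you explicitly defer it as ``the main obstacle'' that you ``expect'' can be verified. The paper proves exactly this statement as Proposition \ref{pro2}, in the form
\[\sum_{\lambda\in\mathbb{F}_q^*}\nu_{\E}(\lambda)^2\ll \frac{|\E|^4}{q}+q|\E|^{5/2},\]
whose error term is dominated by the main term precisely when $|\E|\gg q^{4/3}$; without it (or the companion hinge bound of Remark \ref{0388}) there is no proof.

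Moreover, the route you sketch for this estimate is unlikely to reach the stated threshold. Converting $\sum_g|\E\cap g\E|^2$ into concurrences of the $|\E|^2$ Elekes--Sharir lines and applying a point--line incidence bound in $\mathbb{F}_q^3$ of the shape $|P||L|/q^2+q\sqrt{|P||L|}$ (which is what \cite{be} supplies), dyadically over rich points, yields an error of order $q^2|\E|^2$ up to logarithms; this is $\ll|\E|^4/q$ only when $|\E|\gg q^{3/2}$ --- the very barrier you are trying to beat --- and, consistently, the method of \cite{be} alone gave only the $q^{7/4}$ threshold for triangles. What actually achieves the $q^{4/3}$ crossover in the paper is a different mechanism, following \cite{hanson}: a Cauchy--Schwarz step first replaces the four\-/point energy by the hinge count, $\sum_{\lambda}\nu_{\E}(\lambda)^2\le \frac{|\E|}{4}\sum_{\lambda}H_\lambda(\E)$ (Lemma \ref{1172016}); hinges are then counted as edges between $\E$ and the multiset of perpendicular bisector lines inside the Erd\H{o}s--R\'enyi graph $\mathcal{ER}(\mathbb{F}_q^3)$; the bisector multiplicities are in turn controlled by the spectral gap of the reflection graph $RF_\lambda(\mathbb{F}_q^2)$ (Lemma \ref{reflection}); and solving the resulting quadratic inequality in the energy gives the bound above. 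You would need to reproduce this reflection/bisector argument (or find a strictly sharper incidence input) to close the gap. A smaller omission: your $Q$ includes $\lambda=0$, while the paper's energy bound is only over $\lambda\ne 0$; when $q\equiv 1\bmod 4$, quadruples of distinct points at distance zero along isotropic lines can exceed $|\E|^4/q$ in the range $q^{4/3}\ll|\E|\ll q^{3/2}$, so zero distances must be removed or counted separately before the deletion step, as the paper does.
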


\section{Tools from spectral graph theory}
 A graph $G= (V, E)$ is called an $(n, d, \lambda)$-graph if it is $d$-regular, has $n$
vertices, and the second eigenvalue of $G$ is at most $\lambda$. The result below gives an estimate on the number of edges between two multi-sets of vertices in an $(n, d, \lambda)$-graph.
\begin{lemma}[\cite{hanson}]\label{edge}
  Let $G = (V, E)$ be an $(n, d, \lambda)$-graph. For any two multi-sets of vertices $B, C$, we have
  \[ \left| e (B, C) - \frac{d|B | |C|}{n} \right| \leq \lambda \left(\sum_{b\in B}m_B(b)^2\right)^{1/2}\left(\sum_{c\in C}m_C(c)^2\right)^{1/2}, \]
  where $e(B,C)$ is the number of edges between $B$ and $C$ in $G$, and $m_X(x)$ is the multiplicity of $x$ in X.
\end{lemma}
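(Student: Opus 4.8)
The statement is the weighted (multiset) form of the expander mixing lemma, so the plan is the standard spectral one, carried through with multiplicity vectors. First I would encode each multiset by its multiplicity vector: let $\mathbf b,\mathbf c\in\mathbb R^{V}$ be given by $\mathbf b_x=m_B(x)$ and $\mathbf c_x=m_C(x)$, and let $A$ be the adjacency matrix of $G$. Counting edges between the multisets with the natural multiplicities, $e(B,C)=\sum_{x,y\in V}m_B(x)\,A_{xy}\,m_C(y)=\mathbf b^{\!\top}A\,\mathbf c$, and the two weighted sizes are $|B|=\langle\mathbf b,\mathbf 1\rangle$ and $|C|=\langle\mathbf c,\mathbf 1\rangle$, where $\mathbf 1$ is the all-ones vector. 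Thus the whole problem is to estimate the bilinear form $\mathbf b^{\!\top}A\,\mathbf c$.

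Since $A$ is real symmetric, the spectral theorem gives an orthonormal eigenbasis $v_1,\dots,v_n$ with eigenvalues $\lambda_1\ge\cdots\ge\lambda_n$; because $G$ is $d$-regular we may take $v_1=\mathbf 1/\sqrt n$ with $\lambda_1=d$, and the hypothesis on the second eigenvalue means $|\lambda_i|\le\lambda$ for all $i\ge2$. Writing $\mathbf b=\sum_i\beta_iv_i$ and $\mathbf c=\sum_i\gamma_iv_i$, the top coefficients are $\beta_1=\langle\mathbf b,v_1\rangle=|B|/\sqrt n$ and $\gamma_1=|C|/\sqrt n$, so
$$\mathbf b^{\!\top}A\,\mathbf c=\sum_{i=1}^{n}\lambda_i\beta_i\gamma_i=d\,\frac{|B||C|}{n}+\sum_{i\ge2}\lambda_i\beta_i\gamma_i .$$
The first term is exactly the claimed main term, so it remains to bound the tail.

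For the tail I would use $|\lambda_i|\le\lambda$ together with Cauchy--Schwarz:
$$\Bigl|\sum_{i\ge2}\lambda_i\beta_i\gamma_i\Bigr|\le\lambda\sum_{i\ge2}|\beta_i|\,|\gamma_i|\le\lambda\Bigl(\sum_{i\ge2}\beta_i^2\Bigr)^{1/2}\Bigl(\sum_{i\ge2}\gamma_i^2\Bigr)^{1/2}\le\lambda\,\|\mathbf b\|_2\,\|\mathbf c\|_2,$$
and orthonormality of the eigenbasis gives $\|\mathbf b\|_2^2=\sum_{i}\beta_i^2=\sum_{x}m_B(x)^2$, and likewise for $\mathbf c$, which is precisely the right-hand side of the Lemma. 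There is essentially no genuine obstacle here; the only points requiring care are bookkeeping ones --- fixing the convention for counting edges between multisets so that $e(B,C)=\mathbf b^{\!\top}A\,\mathbf c$, reading ``the second eigenvalue is at most $\lambda$'' as a bound on $|\lambda_i|$ for \emph{every} $i\ge2$ (not merely on $\lambda_2$), and discarding the $i=1$ term from the tail so that one may enlarge $\sum_{i\ge2}\beta_i^2$ to $\|\mathbf b\|_2^2$ without affecting the stated estimate.
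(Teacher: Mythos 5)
Your proof is correct: the spectral decomposition with multiplicity vectors, extraction of the top term via $v_1=\mathbf 1/\sqrt n$, and the Cauchy--Schwarz bound on the remaining coefficients is exactly the standard expander mixing argument, which is also how the cited source \cite{hanson} proves this multiset version (the paper itself imports the lemma without proof). The bookkeeping points you flag --- the edge-counting convention $e(B,C)=\mathbf b^{\!\top}A\,\mathbf c$ and reading the $(n,d,\lambda)$ hypothesis as $|\lambda_i|\le\lambda$ for all $i\ge 2$ --- are handled correctly, so there is nothing to fix.
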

Let $PG (q, d)$ denote the projective geometry of dimension $d-1$ over finite field $\mathbb{F}_q$. The vertices of $P G (q, d)$
correspond to the equivalence classes of the set of all non-zero vectors $[\mathbf{x}] = [x_1, \ldots, x_d]$ over $\mathbb{F}_q$, where two vectors are equivalent if
one is a multiple of the other by a non-zero element of the field. 

In this section, we recall a well-known construction of the Erd\H{o}s-R\'{e}nyi graph due to Alon and Krivelevich \cite{alon} as follows. Let $\mathcal{ER}(\mathbb{F}_q^d)$
denote the graph whose vertices are the points of $P G (q, d)$, and two (not
necessarily distinct) vertices $[\mathbf{x}]=[x_1, \ldots, x_d]$ and $[\mathbf{y}]=[y_1, \ldots, y_d]$ are adjacent if and only if $x_1 y_1+ \ldots + x_d y_d = 0$.  Alon and Krivelevich \cite{alon} obtained the following result on the spectrum of $\mathcal{ER}(\mathbb{F}_q^d)$.
\begin{lemma}[Alon and Krivelevich, \cite{alon}]\label{erdosrenyi}
For any odd prime power $q$  and $d\geq 2$, the Erd\H{o}s-R\'enyi graph $\mathcal{ER}(\mathbb{F}_q^d)$ is an 
\[ \left( \frac{q^d-1}{q-1}, \frac{q^{d-1}-1}{q-1},q^{(d-2)/2} \right) - \mbox{graph}.\]
\end{lemma}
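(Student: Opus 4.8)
The plan is to work directly with the adjacency matrix $A$ of $\mathcal{ER}(\mathbb{F}_q^d)$ on the vertex set $PG(q,d)$ and to reduce the whole statement to a single computation of $A^2$. The three numerical parameters separate naturally. The vertex count is $\frac{q^d-1}{q-1}$ because each projective point is an equivalence class of $q-1$ nonzero scalar multiples among the $q^d-1$ nonzero vectors. For the degree, I fix a representative $\mathbf{x}\neq\mathbf{0}$ and note that $\mathbf{y}\mapsto x_1y_1+\cdots+x_dy_d$ is a nonzero linear functional, so its kernel $\mathbf{x}^\perp$ has dimension $d-1$, contributing $q^{d-1}-1$ nonzero vectors and hence $\frac{q^{d-1}-1}{q-1}$ neighbours. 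This count is independent of $\mathbf{x}$ and is unaffected by whether the defining equation produces a loop (the case of an isotropic representative $\mathbf{x}\in\mathbf{x}^\perp$), so the graph is regular of the stated degree. Since the dot product is symmetric, $A$ is a real symmetric matrix, all eigenvalues are real, and it remains only to bound the non-principal ones.

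The heart of the argument is the observation that the number of common neighbours of two vertices depends only on whether they coincide, which collapses $A^2$ to a linear combination of $I$ and $J$. For representatives $\mathbf{x},\mathbf{z}$ I would count the projective points $[\mathbf{y}]$ with $\mathbf{y}\in\mathbf{x}^\perp\cap\mathbf{z}^\perp$. If $[\mathbf{x}]=[\mathbf{z}]$ the intersection is the hyperplane $\mathbf{x}^\perp$, giving the degree $k:=\frac{q^{d-1}-1}{q-1}$; if $[\mathbf{x}]\neq[\mathbf{z}]$ then $\mathbf{x},\mathbf{z}$ are linearly independent, the two functionals are independent, so $\mathbf{x}^\perp\cap\mathbf{z}^\perp=(\spn\{\mathbf{x},\mathbf{z}\})^\perp$ has dimension $d-2$ and the count is $\frac{q^{d-2}-1}{q-1}$. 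This yields
\[
A^2=\frac{q^{d-2}-1}{q-1}\,J+\left(k-\frac{q^{d-2}-1}{q-1}\right)I=\frac{q^{d-2}-1}{q-1}\,J+q^{d-2}I,
\]
where the last equality uses $k-\frac{q^{d-2}-1}{q-1}=\frac{q^{d-1}-q^{d-2}}{q-1}=q^{d-2}$.

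From this identity the spectrum reads off at once. Because $A$ is $k$-regular, the all-ones vector $\mathbf{1}$ is an eigenvector with eigenvalue $k$; on the orthogonal complement $\mathbf{1}^\perp$ one has $J=0$, so $A^2$ acts as $q^{d-2}I$, forcing every non-principal eigenvalue $\mu$ to satisfy $\mu^2=q^{d-2}$, that is $\mu=\pm q^{(d-2)/2}$. Hence the second eigenvalue is $q^{(d-2)/2}$ in absolute value, in particular at most $q^{(d-2)/2}$, which is exactly the claim. I do not expect a genuine obstacle: the only points that need care are the dimension counts for $\mathbf{x}^\perp\cap\mathbf{z}^\perp$ (which are valid in every characteristic, so the oddness of $q$ plays no role in this particular spectral statement and is carried only from the citation) and the bookkeeping of loops on isotropic vertices, which must be included so that the diagonals of $A$ and $A^2$ agree with the row sums. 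A quick sanity check at $d=2$ (a perfect matching, $A^2=I$, eigenvalues $\pm1=\pm q^{0}$) and $d=3$ ($A^2=qI+J$, eigenvalues $k=q+1$ and $\pm q^{1/2}$) confirms the formula.
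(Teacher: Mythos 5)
Your proof is correct, and it is essentially the same argument as in the source the paper cites for this lemma (the paper itself gives no proof, quoting Alon--Krivelevich): one computes $A^2=q^{d-2}I+\frac{q^{d-2}-1}{q-1}J$ from the two-case common-neighbour count ($\dim(\mathbf{x}^\perp\cap\mathbf{z}^\perp)=d-2$ for independent representatives, by nondegeneracy of the dot product), and reads off $\mu^2=q^{d-2}$ for every eigenvalue on $\mathbf{1}^\perp$. Your bookkeeping of loops at isotropic vertices --- which is what makes the graph regular and the diagonal of $A^2$ equal to the degree --- is exactly the point where such proofs usually go astray, and you handled it correctly.
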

The next lemma is useful in the proof of Theorem \ref{main}, which allows us to reduce $k$-simplices to $2$-simplices. 
\begin{lemma}[Bennett et al., \cite{groupaction}]\label{bodebennet}
Let $V$ be a finite space and $f\colon V\to \mathbb{R}_{\ge 0}$ a function. For any $n\ge 2$ we have
\[\sum_{z\in V}f^n(z)\le |V|\left( \frac{||f||_1}{|V|}\right)^n+\frac{n(n-1)}{2}||f||_{\infty}^{n-2}\sum_{z\in V}\left(f(z)-\frac{||f||_1}{|V|}\right)^2, \]
where $||f||_1=\sum_{z\in V}|f(z)|,$ and $||f||_\infty=\max_{z\in V}f(z)$.
\end{lemma}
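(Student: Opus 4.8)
The plan is to expand $f(z)^n$ around the mean value $\mu \assign ||f||_1/|V|$ by means of a second-order Taylor expansion with Lagrange remainder, and then to exploit the fact that the resulting linear term cancels upon summation. Setting $h(x) = x^n$, Taylor's theorem produces, for each $z \in V$, an intermediate value $\xi = \xi(z)$ lying between $f(z)$ and $\mu$ for which
\[f(z)^n = \mu^n + n\mu^{n-1}\big(f(z) - \mu\big) + \frac{n(n-1)}{2}\,\xi^{n-2}\big(f(z) - \mu\big)^2.\]

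First I would record that, since $f \ge 0$ and $\mu$ is an average of the values of $f$, one has $0 \le \mu \le ||f||_\infty$, whence $0 \le \xi \le \max\{f(z), \mu\} \le ||f||_\infty$. For $n \ge 2$ the exponent $n-2$ is nonnegative, so $\xi^{n-2} \le ||f||_\infty^{n-2}$. Because the remainder term is itself nonnegative, replacing $\xi^{n-2}$ by this larger quantity yields the pointwise upper bound
\[f(z)^n \le \mu^n + n\mu^{n-1}\big(f(z) - \mu\big) + \frac{n(n-1)}{2}\,||f||_\infty^{n-2}\big(f(z) - \mu\big)^2.\]

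Next I would sum this inequality over all $z \in V$. The constant term contributes $|V|\mu^n = |V|(||f||_1/|V|)^n$, which is precisely the first term on the right-hand side of the claim. The crucial point is that the linear term vanishes: $\sum_{z} \big(f(z) - \mu\big) = ||f||_1 - |V|\mu = 0$ by the very definition of $\mu$. What survives is exactly $\frac{n(n-1)}{2}||f||_\infty^{n-2}\sum_z (f(z)-\mu)^2$, completing the argument.

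The argument is elementary, so there is no genuine obstacle; the only steps requiring care are the bookkeeping of the remainder. One must expand at the mean, rather than at $0$, precisely so that the first-order term cancels after summation, and one must verify that the intermediate value $\xi$ never exceeds $||f||_\infty$ so that the remainder is controlled uniformly. The case $n=2$ is the boundary case, where $\xi^{n-2} = 1$ and the expansion is already exact; I would flag this to confirm that the hypothesis $n \ge 2$ is exactly what makes the bound $\xi^{n-2} \le ||f||_\infty^{n-2}$ valid.
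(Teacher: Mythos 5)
Your proof is correct. A point of context: the paper does not prove this lemma at all --- it is imported verbatim from Bennett et al.\ \cite{groupaction} and used as a black box --- so there is no in-paper argument to compare yours against; your write-up would serve as a complete, self-contained substitute for the citation. The steps that need checking all hold: $x \mapsto x^n$ is smooth, so the second-order Taylor expansion with Lagrange remainder at the mean $\mu = \|f\|_1/|V|$ is legitimate; the intermediate point $\xi(z)$ lies between $f(z)$ and $\mu$, both of which lie in $[0,\|f\|_\infty]$ (an average of values bounded by $\|f\|_\infty$ is itself so bounded), so monotonicity of $x \mapsto x^{n-2}$ on $[0,\infty)$ for $n \ge 2$ gives $\xi^{n-2} \le \|f\|_\infty^{n-2}$; since this factor multiplies the nonnegative quantity $\tfrac{n(n-1)}{2}(f(z)-\mu)^2$, the substitution only increases the right-hand side; and the linear term disappears upon summation because $\sum_{z \in V}(f(z)-\mu) = 0$ by definition of $\mu$. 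Your choice to expand at the mean is exactly what makes the constant come out as $\binom{n}{2}$ rather than something exponential in $n$ (as a crude binomial expansion with termwise bounds would give), and the mean-value form of the remainder is what lets $\|f\|_\infty^{n-2}$ appear instead of, say, $(2\|f\|_\infty)^{n-2}$. The only degenerate cases, $n=2$ (where the expansion is exact) and $f \equiv 0$, are harmless, and you flagged the former explicitly.
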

\section{Proof of Theorem \ref{main}}\label{hoho}
For $\mathcal{E}=\mathcal{A}_1\times \cdots\times \mathcal{A}_d\subseteq \mathbb{F}_q^d$ and $t\in \mathbb{F}_q$, we define
$\nu_{\mathcal{E}}(t)$ as the cardinality of the set $\left\lbrace (\mathbf{x}, \mathbf{y})\in \mathcal{E}\times \mathcal{E}\colon ||\mathbf{x}-\mathbf{y}||=t\right\rbrace.$
In order to prove Theorem \ref{main} we first need the following lemma.
\begin{lemma}\label{mainlm}
For $\mathcal{E}=\mathcal{A}_1\times \cdots\times \mathcal{A}_d\subseteq \mathbb{F}_q^d$ with $|\A_d|=\min_{1\le i\le d}|\A_i|$, 
\[\sum_{t\in \mathbb{F}_q}\nu_{\mathcal{E}}(t)^2< \frac{|\mathcal{E}|^{4}}{q}+2q^{d-1}|\mathcal{E}|^{2}|\mathcal{A}_d|.\]
\end{lemma}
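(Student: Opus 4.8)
The plan is to read the left-hand side as a count of quadruples and then expand it with additive characters. Fix a nontrivial additive character $\chi$ of $\mathbb{F}_q$ and recall that $q$ is odd, so $\mathrm{char}\,\mathbb{F}_q\neq2$. Since $\nu_{\mathcal{E}}(t)$ counts ordered pairs at distance $t$, we have
\[
\sum_{t\in\mathbb{F}_q}\nu_{\mathcal{E}}(t)^2=\bigl|\{(\mathbf{x},\mathbf{y},\mathbf{z},\mathbf{w})\in\mathcal{E}^4:\|\mathbf{x}-\mathbf{y}\|=\|\mathbf{z}-\mathbf{w}\|\}\bigr|=\frac1q\sum_{s\in\mathbb{F}_q}|F(s)|^2,\qquad F(s)=\sum_{\mathbf{x},\mathbf{y}\in\mathcal{E}}\chi\bigl(s\|\mathbf{x}-\mathbf{y}\|\bigr),
\]
using the orthogonality relation $\mathbf{1}[A=B]=\tfrac1q\sum_{s}\chi(s(A-B))$. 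The term $s=0$ gives $F(0)=|\mathcal{E}|^2$ and contributes exactly the main term $|\mathcal{E}|^4/q$, so the entire task reduces to showing $\tfrac1q\sum_{s\neq0}|F(s)|^2<2q^{d-1}|\mathcal{E}|^2|\mathcal{A}_d|$.

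Because $\mathcal{E}$ is a Cartesian product and $\|\mathbf{x}-\mathbf{y}\|=\sum_i(x_i-y_i)^2$, the exponential sum factors coordinatewise:
\[
F(s)=\prod_{i=1}^{d}F_i(s),\qquad F_i(s)=\sum_{x,y\in\mathcal{A}_i}\chi\bigl(s(x-y)^2\bigr).
\]
The crucial input, and the step I expect to be the main obstacle, is a pointwise square-root-cancellation estimate, $|F_i(s)|\le\sqrt{q}\,|\mathcal{A}_i|$ for every $s\neq0$. I would obtain it by linearizing the quadratic phase: writing $F_i(s)=\sum_a r_i(a)\chi(sa^2)$ with $r_i(a)=|\{(x,y)\in\mathcal{A}_i^2:x-y=a\}|$, and inserting the Gauss-sum identity $\chi(sa^2)=c_s\sum_b\chi(-b^2/(4s))\chi(ab)$ with $|c_s|=q^{-1/2}$, one rewrites $F_i(s)$ as a unimodular average of the squared Fourier coefficients $|\widehat{\mathbf{1}_{\mathcal{A}_i}}(b)|^2$; bounding the kernel by $1$ and applying Parseval $\sum_b|\widehat{\mathbf{1}_{\mathcal{A}_i}}(b)|^2=q|\mathcal{A}_i|$ then yields the claim. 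The delicate point is that cancellation must be harvested in \emph{every} coordinate at once: the trivial bound $|F_i(s)|\le|\mathcal{A}_i|^2$ is far too lossy and would overshoot the target error by a factor as large as $q^{d-1}$.

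With the pointwise bound available, I would single out the coordinate of minimal size, namely index $d$, keep it in $L^2$, and estimate the remaining $d-1$ factors pointwise. For the distinguished factor, orthogonality of characters gives the exact identity
\[
\sum_{s\in\mathbb{F}_q}|F_d(s)|^2=q\bigl(2E(\mathcal{A}_d)-|\mathcal{A}_d|^2\bigr)<2q|\mathcal{A}_d|^3,
\]
where $E(\mathcal{A}_d)=\sum_a r_d(a)^2$ is the additive energy, the correction $-|\mathcal{A}_d|^2$ accounts for the overlap at $a=0$ (using $r_d(-a)=r_d(a)$), and the final inequality follows from $E(\mathcal{A}_d)\le|\mathcal{A}_d|^3$. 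Combining this with the pointwise bounds on the other factors gives
\[
\frac1q\sum_{s\neq0}|F(s)|^2\le\frac1q\Bigl(\prod_{i\neq d}q|\mathcal{A}_i|^2\Bigr)\sum_{s\neq0}|F_d(s)|^2<2q^{d-1}|\mathcal{E}|^2|\mathcal{A}_d|,
\]
which is precisely the error term.

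Two structural remarks guide the write-up. First, the computation is symmetric in the choice of distinguished index $j$: it delivers error $2q^{d-1}|\mathcal{E}|^2|\mathcal{A}_j|$ for any $j$, so optimizing over $j$ forces the normalization $|\mathcal{A}_d|=\min_i|\mathcal{A}_i|$ recorded in the hypothesis. Second, the strict inequality is genuine and cheap: it already follows from the correction term $-|\mathcal{A}_d|^2$ in the energy identity (equivalently from $\sum_{s\neq0}\le\sum_s$), so no delicate control of the error is needed beyond the pointwise Gauss-sum bound.
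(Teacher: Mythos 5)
Your proof is correct, and it takes a genuinely different route from the paper's. The paper never expands in characters explicitly: it fixes the last coordinates $(a,b)\in\A_d^2$ of two of the four points, encodes the resulting equation $\|\mathbf{x}-\mathbf{z}\|=\|\mathbf{y}-\mathbf{t}\|$ as an orthogonality relation between two multisets of points in $PG(q,2d)$, bounds the number of edges between them in the Erd\H{o}s--R\'enyi graph $\mathcal{ER}(\mathbb{F}_q^{2d})$ by the expander mixing lemma (Lemma \ref{edge}) together with the Alon--Krivelevich eigenvalue bound (Lemma \ref{erdosrenyi}), and then sums over the $|\A_d|^2$ choices of $(a,b)$. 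You instead exploit the product structure analytically: the complete sum $F(s)$ factors coordinatewise, each of the $d-1$ large coordinates gives square-root cancellation $|F_i(s)|\le\sqrt{q}\,|\A_i|$ via Gauss sums and Parseval, and the smallest coordinate is kept in $L^2$, where the exact identity $\sum_s|F_d(s)|^2=q\bigl(2E(\A_d)-|\A_d|^2\bigr)$ and the trivial bound $E(\A_d)\le|\A_d|^3$ close the argument; all steps check out, including the bookkeeping $\prod_{i\ne d}|\A_i|^2\cdot|\A_d|^3=|\E|^2|\A_d|$. The two proofs harvest the same cancellation --- the eigenvalue bound for $\mathcal{ER}$ is itself a character-sum estimate --- but the packaging differs in what it buys: yours is self-contained (no spectral-graph input), makes the appearance of the minimal factor $|\A_d|$ transparent, and in fact yields the stronger error term $2q^{d-1}\bigl(\prod_{i\ne d}|\A_i|^2\bigr)E(\A_d)$, an improvement whenever $\A_d$ has small additive energy; the paper's version stays inside the spectral-graph framework used throughout the article and reuses Lemmas \ref{edge} and \ref{erdosrenyi} rather than introducing new tools. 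One shared caveat: both arguments require $q$ odd (you for the Gauss-sum modulus, the paper for Lemma \ref{erdosrenyi}) and $\E\ne\emptyset$ for the strict inequality, hypotheses the lemma statement leaves implicit.
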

\begin{proof}
For a fixed pair $(a, b)\in \A_d^2$, let $N(a, b)$ denote the set  of quadruples $(\mathbf{x}, \mathbf{y}, \mathbf{z}, \mathbf{t})\in \mathcal{E}^4$ with $\mathbf{x}=(x_1, \ldots, x_{d-1}, a)$ and $\mathbf{y}=(y_1, \ldots, y_{d-1}, b)$ satisfying $||\mathbf{x}-\mathbf{z}||=||\mathbf{y}-\mathbf{t}||$. Then one has 
\[\sum_{t\in \mathbb{F}_q}\nu_{\mathcal{E}}(t)^2=\sum_{(a, b)\in \A_d^2}|N(a,b)|\le |\A_d|^2 \max_{(a, b)\in \A_d^2}|N(a,b)|.\]
We next show that 
\[|N(a,b)|\le \frac{|\mathcal{E}|^{2}|\A_d|^{-2}}{q}+q^{d-1}|\mathcal{E}|^{2}|\A_d|^{-1}, ~\forall (a, b)\in \A_d^2.\]
Indeed, let $U$ and $V$ be, respectively, multi-subsets in $PG(q, 2d)$ defined by
\[U=\left\lbrace \left[-2x_1, \ldots, -2x_{d-1}, 1, t_1, \ldots, t_{d-1}, -(t_d-b)^2-\sum_{i=1}^{d-1}t_i^2+\sum_{i=1}^{d-1}x_i^2\right]\colon x_i, ~t_i\in \A_i\right\rbrace,\]
and 
\[V=\left\lbrace \left[z_1, \ldots, z_{d-1}, (z_d-a)^2+\sum_{i=1}^{d-1}z_i^2-\sum_{i=1}^{d-1}y_i^2, 2y_1, \ldots, 2y_{d-1}, 1\right]\colon z_i, ~t_i\in \A_i\right\rbrace.\]
It is clear that 
$$|U|= |\mathcal{E}||\A_d|^{-1}, |V|= |\mathcal{E}||\A_d|^{-1}, ~m_{U}(\mathbf{u})\le 2, ~m_{V}(\mathbf{v})\le 2, \forall \mathbf{u}\in U, \mathbf{v}\in V.$$ 
and $|N(a,b)|$ is equal to the number of edges between $U$ and $V$ in the Erd\H{o}s-R\'{e}nyi graph $\mathcal{ER}(\mathbb{F}_q^{2d})$. Thus it follows from Lemmas \ref{edge} and  \ref{erdosrenyi} that 
\[|N(a,b)|<\frac{|\mathcal{E}|^{4}|\A_d|^{-2}}{q}+2q^{d-1}|\mathcal{E}|^2|\A_d|^{-1},\]
and this completes the proof of the lemma.
\end{proof}
It is convenient to recall the following definition which is given in \cite{groupaction}.
\begin{definition}
Let $V$ be the $\mathbb{F}_q$-vector space of $(k+1)\times (k+1)$ symmetric matrices $\mathbb {D}$ which can be viewed as the space of possible ordered $k$-simplex distances. For $\E\subset\mathbb{F}_q^d$, we define $\mu\colon V\to \mathbb{Z}$
\[\mu(\mathbb{D}):=\#\left\lbrace (\mathbf{x}_1, \ldots, \mathbf{x}_{k+1})\in \E^{k+1}\colon ||\mathbf{x}_i-\mathbf{x}_j||=d_{i,j}, 1\le i<j\le k+1\right\rbrace\]
\end{definition}
\begin{proof}[{\bf Proof of Theorem \ref{main}}]
Suppose that $|\A_d|=\min_{1\le i\le d}|\A_i|$. Let $T_{k,d}(\E)$ denote the set of congruence classes of $k$-simplices  determined by $\E$. It follows from the Cauchy-Schwarz inequality that
\[\sum_{\mathbb{D}}\mu(\mathbb{D})\le \left(\sum_{\mathbb{D}\in supp(\mu)}1\right)^{1/2}\left(\sum_{\mathbb{D}\in supp(\mu)}\mu(\mathbb{D})^2\right)^{1/2}=|T_{k, d}(\E)|^{1/2}\left(\sum_{\mathbb{D}\in supp(\mu)}\mu(\mathbb{D})^2\right)^{1/2}.\]
This gives 
\[|T_{k,d}(\E)|\ge \frac{(\sum_{\mathbb{D}}\mu(\mathbb{D}))^2}{\sum_{\mathbb{D}}\mu(\mathbb{D})^2}\ge\frac{|\E|^{2k+2}}{\sum_{\mathbb{D}}\mu(\mathbb{D})^2}.\]
For $\theta\in O(d, \mathbb{F}_q)$ and $\mathbf{z}\in \mathbb{F}_q^d$, we define 
\[w_\theta(\mathbf{z}):=\left\lbrace (\mathbf{u}, \mathbf{v})\in \E^2\colon \theta(\mathbf{u})+\mathbf{z}=\mathbf{v}\right\rbrace.\]
and denote the common stabilizer size of $k$-simplices in the congruence class $\mathbb{D}$ by $s(\mathbb{D})$. 
It has been shown in \cite{groupaction} that $s(\mathbb{D})\le |O(d-k, \mathbb{F}_q)|$, and $|O(n, \mathbb{F}_q)|=2(1+o(1))q^{\binom{n}{2}}$. Furthermore, it is easy to check that
\begin{equation}\label{hai}\sum_{\mathbb{D}}s(\mathbb{D})\mu(\mathbb{D})^2\le \sum_{\theta\in O(d, \mathbb{F}_q), \mathbf{z}\in \mathbb{F}_q^d}|w_\theta(\mathbf{z})|^{k+1},\end{equation}
where $|w_\theta(\mathbf{z})|$ is the cardinality of $w_\theta(\mathbf{z})$.

For a fixed $\theta$, it follows from Lemma \ref{bodebennet} with $f(z):=|w_\theta(z)|$, $||f||_1=|\E|^2$, and $||f||_\infty\le |\E|$ that 
\[
\sum_{\mathbf{z}\in \mathbb{F}_q^{d}}|w_\theta(\mathbf{z})|^{k+1}\le\frac{|\E|^{2k+2}}{q^{kd}}+\frac{k(k-1)}{2}|\E|^{k-1}\sum_{ \mathbf{z}\in \mathbb{F}_q^d}\left(|w_\theta(\mathbf{z})|-\frac{|\E|^2}{q^d}\right)^2.\]
Thus we obtain
\begin{align}\label{mot*}
\sum_{\theta\in O(d, \mathbb{F}_q), \mathbf{z}\in \mathbb{F}_q^{d}}|w_\theta(\mathbf{z})|^{k+1}&\le |O(d, \mathbb{F}_q)|\frac{|\E|^{2k+2}}{q^{kd}}+\frac{k(k-1)}{2}|\E|^{k-1}\sum_{ \theta, \mathbf{z}}\left(|w_\theta(\mathbf{z})|-\frac{|\E|^2}{q^d}\right)^2\\
&\le |O(d, \mathbb{F}_q)|\frac{|\E|^{2k+2}}{q^{kd}}+\frac{k(k-1)}{2}|\E|^{k-1}\left(\sum_{ \theta, \mathbf{z}}|w_\theta(\mathbf{z})|^2-\frac{|\E|^4|O(d, \mathbb{F}_q)|}{q^d}\right).\nonumber
\end{align}

It follows from the definition of $w_\theta(\mathbf{z})$ that $|w_\theta(\mathbf{z})|^2$ is equal to the number of quadruples $(\mathbf{a}, \mathbf{b}, \mathbf{c}, \mathbf{d})\in \E^4$ satisfying $\theta(\mathbf{a})+\mathbf{z}=\mathbf{c}$ and $\theta(\mathbf{b})+\mathbf{z}=\mathbf{d}$. This implies that $\theta(\mathbf{a}-\mathbf{b})=(\mathbf{c}-\mathbf{d})$, and $||\mathbf{a}-\mathbf{b}||=||\mathbf{c}-\mathbf{d}||$. Since the stabilizer of a non-zero element in $\mathbb{F}_q^d$ is at most $|O(d-1, \mathbb{F}_q)|$, it follows that each  quadruple $(\mathbf{a}, \mathbf{b}, \mathbf{c}, \mathbf{d})\in \E^4$, which satisfies $\mathbf{a}-\mathbf{b}\ne \mathbf{0}$, $||\mathbf{a}-\mathbf{b}||=||\mathbf{c}-\mathbf{d}||$, and $\theta(\mathbf{a}-\mathbf{b})=(\mathbf{c}-\mathbf{d})$ for some $\theta$, will be counted at most $|O(d-1, \mathbb{F}_q)|$ times in the sum $\sum_{ \theta, \mathbf{z}}|w_\theta(\mathbf{z})|^2$. If $\mathbf{a}=\mathbf{b}$ and $\mathbf{c}=\mathbf{d}$, then the quadruples $(\mathbf{a}, \mathbf{b}, \mathbf{c}, \mathbf{d})$ will be counted at most $|O(d, \mathbb{F}_q)|$ times in the sum $\sum_{ \theta, \mathbf{z}}|w_\theta(\mathbf{z})|^2$.

Let 
\[W:=\left\lbrace (\mathbf{a}, \mathbf{b}, \mathbf{c}, \mathbf{d})\in \E^4\colon ||\mathbf{a}-\mathbf{b}||=||\mathbf{c}-\mathbf{d}||\right\rbrace.\]

It is clear that $\sum_{t\in \mathbb{F}_q}\nu_\E(t)^2=|W|$. If $(\mathbf{a}, \mathbf{b})$ and $(\mathbf{c}, \mathbf{d})$ belong to $w_\theta(z)$ for some $\theta\in O(d, \mathbb{F}_q)$ and $\mathbf{z}\in \mathbb{F}_q^d$, then  $(\mathbf{a}, \mathbf{b}, \mathbf{c}, \mathbf{d})\in W$ and $(\mathbf{b}, \mathbf{a}, \mathbf{d}, \mathbf{c})\in W$. From this observation,  we get the following 
\begin{align}\label{hai*}\sum_{ \theta, \mathbf{z}}|w_\theta(\mathbf{z})|^2&\le \frac{|O(d-1, \mathbb{F}_q)||W|}{2}+|O(d, \mathbb{F}_q)||\E|^2\\
&\le \frac{|O(d-1, \mathbb{F}_q)|}{2}\left(\sum_{t\in \mathbb{F}_q}\nu_{\mathcal{E}}(t)^2\right)+|O(d, \mathbb{F}_q)||\E|^2,
\end{align}
where the factor $|\E|^2$ comes from the number of quadruples $(\mathbf{a}, \mathbf{b}, \mathbf{c}, \mathbf{d})$ with  $\mathbf{a}=\mathbf{b}$ and $\mathbf{c}=\mathbf{d}$.

Lemma \ref{mainlm} together with the inequalities 
$$|O(d-1, \mathbb{F}_q)|\dfrac{|\E|^4}{2q}\le \dfrac{|O(d, \mathbb{F}_q)||\E|^4}{q^d}$$ 
and 
$$|O(d, \mathbb{F}_q)||\E|^2\le |O(d-1, \mathbb{F}_q)|q^{d-1}|\E|^{2}|\A_d|$$ leads to
\begin{align*}
\sum_{ \theta, \mathbf{z}}|w_\theta(\mathbf{z})|^2&\le \frac{|O(d-1, \mathbb{F}_q)|}{2}\left(\frac{|\E|^4}{q}+2q^{d-1}|\E|^{2}|\A_d|\right)+|O(d, \mathbb{F}_q)||\E|^2\\
&\le 4|O(d-1, \mathbb{F}_q)|q^{d-1}|\E|^{2}|\A_d|,
\end{align*}

Combining (\ref{mot*}) with  (\ref{hai*}), we get 
\begin{align}\label{bon}
\sum_{\theta\in O(d, \mathbb{F}_q), \mathbf{z}\in \mathbb{F}_q^{d}}|w_\theta(\mathbf{z})|^{k+1}&\le |O(d, \mathbb{F}_q)|\frac{|\E|^{2k+2}}{q^{kd}}+2k(k-1)|\E|^{k-1}|O(d-1, \mathbb{F}_q)|(q^{d-1}|\E|^{2}|\A_d|)\nonumber\\
&\le |O(d, \mathbb{F}_q)|\frac{|\E|^{2k+2}}{q^{kd}}+2k(k-1)q^{d-1}|\E|^{k+1}|\A_d||O(d-1, \mathbb{F}_q)|.
\end{align}

It follows from (\ref{hai}) and (\ref{bon}) that
\[\sum_{\mathbb{D}}s(\mathbb{D})\mu(\mathbb{D})^2\le  |O(d, \mathbb{F}_q)|\frac{|\E|^{2k+2}}{q^{kd}}+2k(k-1)q^{d-1}|\E|^{k+1}|\A_d||O(d-1, \mathbb{F}_q)|.\]

Furthermore we have  $s(\mathbb{D})\le |O(d-k, \mathbb{F}_q)|$, this implies that 
\[\sum_{\mathbb{D}}\mu(\mathbb{D})^2\le \frac{|\E|^{2k+2}}{q^{\binom{k+1}{2}}}+2k(k-1)q^{kd-\binom{k+1}{2}}|\E|^{k+1}|\A_d|=(1+o(1)) \frac{|\E|^{2k+2}}{q^{\binom{k+1}{2}}}\]
when $q^{kd}=o\left(|\E|^{k+1}|\A_d|^{-1}\right)$. In other words, if $q^{kd}=o\left(|\E|^{k+1}|\A_d|^{-1}\right)$, then the number of congruence simplices determined by $\E$ satisfies
\[|T_{k,d}(\E)|=(1-o(1)) q^{\binom{k+1}{2}},\]
which ends the proof of the theorem.
\end{proof}
\section{Proof of Theorem \ref{tamgiac}}
First we need the following proposition.
\begin{proposition}\label{pro2}
Let $\mathcal{E}$ be a subset in $\mathbb{F}_q^2$ and $\nu_{\mathcal E}(\lambda)$ be the number of pairs $(\mathbf{p}, \mathbf{q})\in \E\times \E$ such that $||\mathbf{p}-\mathbf{q}||=\lambda$. If $|\mathcal{E}|\ge  4q$, then
\[\sum_{\lambda\in \mathbb{F}_q^*}\nu_{\mathcal{E}}(\lambda)^2\ll \frac{|\mathcal{E}|^4}{q}+q|\mathcal{E}|^{5/2}.\]
\end{proposition}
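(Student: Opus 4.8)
The plan is to read $\sum_{\lambda\in\mathbb{F}_q^*}\nu_\E(\lambda)^2$ as the number of quadruples $(\mathbf p,\mathbf q,\mathbf r,\mathbf s)\in\E^4$ with $\|\mathbf p-\mathbf q\|=\|\mathbf r-\mathbf s\|\neq 0$, and to control it through the action of the group $G=SO(2,\mathbb{F}_q)\ltimes\mathbb{F}_q^2$ of orientation-preserving rigid motions, following the Elekes--Sharir paradigm. First I would record that the rotation subgroup $SO(2,\mathbb{F}_q)$ acts simply transitively on every sphere $\{\mathbf x:\|\mathbf x\|=\lambda\}$ with $\lambda\neq0$: this sphere has $q-\chi(-1)$ points, exactly the order of $SO(2,\mathbb{F}_q)$, and the stabiliser of an anisotropic vector is trivial. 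Hence to each such quadruple there corresponds a unique $g\in G$ with $g(\mathbf p)=\mathbf r$ and $g(\mathbf q)=\mathbf s$. Writing $r(g)=|\{(\mathbf p,\mathbf r)\in\E^2: g(\mathbf p)=\mathbf r\}|$, this yields the clean bound $\sum_{\lambda\neq0}\nu_\E(\lambda)^2\le\sum_{g\in G}r(g)^2$, and the task becomes bounding the right-hand side.

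Next I would give $\sum_{g}r(g)^2$ an incidence-geometric meaning. Parametrising $SO(2,\mathbb{F}_q)$ rationally through the conic $x^2+y^2=1$ identifies $G$ with a $3$-dimensional affine space, and under this identification each set $\ell_{\mathbf p,\mathbf r}=\{g\in G:g(\mathbf p)=\mathbf r\}$ becomes a line, so that $r(g)$ is the number of these $|\E|^2$ lines through $g$. Two distinct such lines meet in at most one point: if $g(\mathbf p)=\mathbf r$, $g(\mathbf q)=\mathbf s$ and $g'(\mathbf p)=\mathbf r$, $g'(\mathbf q)=\mathbf s$ with $\mathbf p\neq\mathbf q$, the rotation part is forced and hence $g=g'$. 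Thus $\sum_{g}r(g)^2=\sum_g r(g)+2\cdot\#\{\text{intersecting pairs of lines}\}$, where the diagonal term $\sum_g r(g)=(q-\chi(-1))|\E|^2$ is harmless and the desired main term $|\E|^4/q$ appears as the equidistributed count of $|\E|^2$ lines over the $\approx q^3$ points of $G$.

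It then remains to bound the number of intersecting pairs of these lines by $|\E|^4/q+q|\E|^{5/2}$, which I would do by a dyadic decomposition of $G$ according to the size of $r(g)$ combined with a point--line incidence estimate in $\mathbb{F}_q^3$. Concretely, for the level sets $P_j=\{g:r(g)\ge j\}$ one has $j\,|P_j|\le I(P_j,\mathcal{L})$, where $\mathcal{L}$ is the family of $|\E|^2$ lines; feeding an incidence bound into this controls $|P_j|$, and summing over dyadic $j$ controls $\sum_g r(g)^2$. The incidence input can itself be packaged as an edge count in an Erd\H{o}s--R\'enyi graph and estimated through Lemma \ref{edge} and Lemma \ref{erdosrenyi}. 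The degenerate contributions that were excluded by restricting to $\lambda\neq0$ --- isotropic differences $\|\mathbf p-\mathbf q\|=0$ and coincidences $\mathbf p=\mathbf q$ --- amount to at most $O(q|\E|^2)$ and are absorbed into the error using the hypothesis $|\E|\ge 4q$.

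The hard part is extracting the exponent $5/2$. A direct application of expander mixing (equivalently, the Fourier $L^2$ estimate) to the distance energy only yields an error of size $q^2|\E|^2$, which is weaker than $q|\E|^{5/2}$ throughout the range $q\le|\E|<q^2$. Improving on this forces one to use the geometry of the Elekes--Sharir lines beyond their pairwise single-intersection property, namely a bound on how many of them can lie in a common plane --- equivalently, a bound on the maximal multiplicity $\max_g r(g)$ and its planar analogue. It is precisely this non-clustering that produces the summand $q|\E|^{5/2}$, so the crux of the argument, and the step I expect to require the most care, is establishing the needed concentration bound for the lines $\ell_{\mathbf p,\mathbf r}$ and threading it through the dyadic incidence count, while verifying that the remaining terms (a generic error of order $|\E|^3$ and the diagonal/degenerate contribution of order $q|\E|^2$) are dominated by $|\E|^4/q+q|\E|^{5/2}$.
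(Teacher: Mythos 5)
Your setup is the Elekes--Sharir paradigm: pass to the group $G=SO(2,\mathbb{F}_q)\ltimes\mathbb{F}_q^2$, write the energy as $\sum_g r(g)^2$, view the sets $\ell_{\mathbf{p},\mathbf{r}}$ as lines in a three-dimensional space, and hope to beat the trivial incidence count. The setup itself is sound (the simple transitivity of $SO(2,\mathbb{F}_q)$ on non-isotropic spheres does make $\sum_{\lambda\neq 0}\nu_\E(\lambda)^2\le\sum_g r(g)^2$, and the degenerate terms are indeed $O(q|\E|^2)$), but note that this reformulation is essentially an identity, not an estimate: all of the difficulty is transferred intact to bounding $\sum_g r(g)^2$. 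The proposition's entire content is the term $q|\E|^{5/2}$, and that is exactly the step you defer --- the ``non-clustering'' bound for the lines $\ell_{\mathbf{p},\mathbf{r}}$ which you say ``requires the most care.'' You never produce it, and there is a structural reason to doubt it can be produced by the tools at hand: over $\mathbb{F}_q$ there is no Guth--Katz polynomial method, lines in a three-dimensional space over $\mathbb{F}_q$ can cluster heavily in planes and reguli, and the only available point-line incidence inputs are Vinh-type expander bounds. This is precisely the route of Bennett, Iosevich, and Pakianathan \cite{be}, and it is known to yield only the weaker threshold $q^{7/4}$; your own accounting concedes that direct expander mixing gives an error $q^2|\E|^2$, which is insufficient. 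So the proposal is a plan whose decisive step is missing, and whose intended mechanism is not known to work in this setting.

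The paper closes the loop by a different mechanism, which is worth internalizing because it is where $5/2$ actually comes from. First, Cauchy--Schwarz converts the energy into a hinge count: $\sum_{\lambda\neq 0}\nu_\E(\lambda)^2\le\tfrac{|\E|}{4}\sum_{\lambda\neq 0}H_\lambda(\E)$ (Lemma \ref{1172016}). Hinges are exactly incidences between the points of $\E$ and the multiset $\L$ of perpendicular bisector lines, so Lemmas \ref{edge} and \ref{erdosrenyi} give
\[
\sum_{\lambda\neq 0}H_\lambda(\E)\le \frac{|\E||\L|}{q}+q^{1/2}|\E|^{1/2}\Bigl(\sum_{l\in\L}w(l)^2\Bigr)^{1/2},
\]
where $w(l)$ is the multiplicity of $l$. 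The key observation is that the bisector energy $\sum_l w(l)^2$ counts quadruples related by a \emph{reflection}, and expander mixing on the reflection graph $RF_\lambda(\mathbb{F}_q^2)$ (Lemma \ref{reflection}) bounds it \emph{back in terms of the unknown distance energy with a gain of $1/q$}:
\[
\sum_{l\in\L}w(l)^2\le \frac{1}{q}\sum_{\lambda\neq 0}\nu_\E(\lambda)^2+2(q-1)|\E|^2.
\]
Substituting yields a self-referential quadratic inequality in $\sum_{\lambda\neq 0}\nu_\E(\lambda)^2$, and solving it produces $\frac{|\E|^4}{q}+q|\E|^{5/2}$. In other words, the exponent $5/2$ is extracted by a bootstrap through two different expander estimates (bisector incidences and reflections), not by any concentration statement about Elekes--Sharir lines; replacing your missing step with this bootstrap is what your argument would need to become a proof.
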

The proof of this proposition is based on the following lemma.

\begin{lemma}\label{1172016}
Let $\mathcal{E}$ be a subset in $\mathbb{F}_q^2$. For a fixed $\lambda\in \mathbb{F}_q^*$,  denote by $H_\lambda(\mathcal{E})$ the number of hinges of the form $(\mathbf{p}, \mathbf{q_1}, \mathbf{q_2})\in \mathcal{E}\times \mathcal{E}\times \mathcal{E}$ with $||\mathbf{p}-\mathbf{q_1}||=||\mathbf{p}-\mathbf{q_2}||=\lambda$. 
If $|\mathcal{E}|\ge  4q$, then
\[\sum_{\lambda\in \mathbb{F}_q^*}\nu_{\mathcal{E}}(\lambda)^2\le\frac{|\mathcal{E}|}{4}\sum_{\lambda\in \mathbb{F}_q^*} H_{\lambda}(\mathcal{E}).\]
\end{lemma}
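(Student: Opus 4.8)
The plan is to fix a nonzero $\lambda$ and organize both quantities around the ``degree'' function
\[ r_\lambda(\mathbf p)\assign \#\{\mathbf q\in\mathcal E : \|\mathbf p-\mathbf q\|=\lambda\},\qquad \mathbf p\in\mathcal E. \]
With this notation the hinge count is exactly a sum of squares of degrees, since an ordered hinge with apex $\mathbf p$ is a choice of two (not necessarily distinct) points of $\mathcal E$ on the circle of radius $\lambda$ about $\mathbf p$:
\[ H_\lambda(\mathcal E)=\sum_{\mathbf p\in\mathcal E} r_\lambda(\mathbf p)^2. \]
On the other hand $\nu_{\mathcal E}(\lambda)$ is a sum of first powers of the same degrees: each unordered pair at distance $\lambda$ is seen from each of its two endpoints, and since $\lambda\neq 0$ forces $\mathbf p\neq\mathbf q$ there is no diagonal term, so $\nu_{\mathcal E}(\lambda)=\tfrac12\sum_{\mathbf p\in\mathcal E} r_\lambda(\mathbf p)$. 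Thus the lemma is, at heart, a comparison between the first and second moments of the sequence $(r_\lambda(\mathbf p))_{\mathbf p\in\mathcal E}$.

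The main step is then a single application of the Cauchy--Schwarz inequality for each $\lambda$. Writing $\sum_{\mathbf p} r_\lambda(\mathbf p)\le |\mathcal E|^{1/2}\big(\sum_{\mathbf p} r_\lambda(\mathbf p)^2\big)^{1/2}$ and squaring gives
\[ \Big(\sum_{\mathbf p\in\mathcal E} r_\lambda(\mathbf p)\Big)^2\le |\mathcal E|\sum_{\mathbf p\in\mathcal E} r_\lambda(\mathbf p)^2=|\mathcal E|\,H_\lambda(\mathcal E). \]
Since the left-hand side equals $4\,\nu_{\mathcal E}(\lambda)^2$, this is precisely $\nu_{\mathcal E}(\lambda)^2\le \tfrac{|\mathcal E|}{4}H_\lambda(\mathcal E)$, and summing over $\lambda\in\mathbb F_q^*$ yields the claimed inequality. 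The factor $\tfrac14$ is therefore not slack in an estimate but comes cleanly from the two-to-one correspondence between ordered and unordered pairs at distance $\lambda$, appearing squared; one can check that the inequality is essentially tight for $\mathcal E=\mathbb F_q^2$, where each distance graph is nearly regular and Cauchy--Schwarz is almost an equality.

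The role of the hypothesis $|\mathcal E|\ge 4q$ is to keep the argument in the regime where this reduction is useful downstream. Each circle $\{\mathbf q:\|\mathbf p-\mathbf q\|=\lambda\}$ with $\lambda\neq 0$ carries at most $q+1$ points of $\mathbb F_q^2$, so $r_\lambda(\mathbf p)\le q+1\le \tfrac{|\mathcal E|}{4}+1$; this ensures no single apex dominates and, more importantly, it is exactly the regime in which the subsequent bound on $\sum_{\lambda} H_\lambda(\mathcal E)$ from a point--circle incidence estimate can be fed into the proof of Proposition~\ref{pro2}. I expect the only genuinely delicate point to be the bookkeeping of the symmetry factor---tracking the ordered versus unordered conventions for $\nu_{\mathcal E}(\lambda)$ and $H_\lambda(\mathcal E)$ consistently so that the constant is exactly $\tfrac14$ rather than $1$---since the analytic content is just the one-line Cauchy--Schwarz above.
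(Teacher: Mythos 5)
Your proposal is correct and essentially identical to the paper's own proof: the paper introduces the same degree function $x_{\mathbf{p}}^\lambda$, uses the identities $H_\lambda(\mathcal{E})=\sum_{\mathbf{p}\in\mathcal{E}}(x_{\mathbf{p}}^\lambda)^2$ and $\nu_{\mathcal{E}}(\lambda)=\tfrac12\sum_{\mathbf{p}\in\mathcal{E}}x_{\mathbf{p}}^\lambda$, and applies Cauchy--Schwarz for each $\lambda\in\mathbb{F}_q^*$ exactly as you do. Your remark about the symmetry factor is on point: the factor $\tfrac14$ forces the unordered-pair convention for $\nu_{\mathcal{E}}(\lambda)$ (the paper's proof silently adopts it, despite the ordered-looking definition in Proposition \ref{pro2}), and the hypothesis $|\mathcal{E}|\ge 4q$ is indeed not needed for the Cauchy--Schwarz step itself but only for the paper's preliminary reduction discarding points on isotropic lines.
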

\begin{proof}
By assumption $|\mathcal{E}|\ge 4q$ and note that there are at most $2q$ points on isotropic lines in the case $q\equiv 3\mod 4$, one may assume that there are no two points $(\mathbf{p}, \mathbf{q})$ in $\mathcal{E}\times \mathcal{E}$ with $||\mathbf{p}-\mathbf{q}||=0$. We now prove that 
\begin{equation}\label{lund11}
\sum_{\lambda\in \mathbb{F}_q^*}\nu_{\mathcal{E}}(\lambda)^2\le |\mathcal{E}|\sum_{\lambda\in \mathbb{F}_q^*}H_\lambda(\mathcal{E}).\end{equation}
For each point $\mathbf{p}$ in $\mathcal{E}$, let $x_{\mathbf{p}}^\lambda$ be the number of points $\mathbf{q}\in \mathcal{E}$ satisfying $||\mathbf{p}-\mathbf{q}||=\lambda$. Then one has 
\[H_{\lambda}(\mathcal{E})=\sum_{\mathbf{p}\in \mathcal{E}}(x_{\mathbf{p}}^\lambda)^2.\]
On the other hand, by applying the Cauchy-Schwarz inequality,  we obtain
\[\sum_{\lambda\in \mathbb{F}_q^*}\nu_{\mathcal{E}}(\lambda)^2=\frac{1}{4}\sum_{\lambda\in \mathbb{F}_q^*}\left(\sum_{\mathbf{p}\in \mathcal{E}}x_{\mathbf{p}}^\lambda\right)^2\le \frac{1}{4}\sum_{\lambda\in \mathbb{F}_q^*}|\mathcal{E}|H_{\lambda}(\mathcal{E})=\frac{1}{4}|\mathcal{E}|\sum_{\lambda\ne 0} H_{\lambda}(\mathcal{E}),\]
which completes the proof of the lemma.
\end{proof}

A reflection about a point $\mathbf{u}\in \mathbb{F}_q^2$ is a map of the form
\[R_\mathbf{u}(\mathbf{x})=R(\mathbf{x}-\mathbf{u})+\mathbf{u},\]
where $R$ is a matrix of the form 
\[R=\begin{pmatrix}
a&b\\
b&-a
\end{pmatrix}, \quad a, b\in \mathbb{F}_q, ~a^2+b^2=1.\]

For $\lambda\in \mathbb{F}_q^*$,  the \textit{reflection graph} $RF_\lambda(\mathbb{F}_q^2)$ is constructed as
 \[V(RF_\lambda(\mathbb{F}_q^2))=\left\lbrace (\mathbf{x}, \mathbf{y})\in \mathbb{F}_q^2\times \mathbb{F}_q^2\colon ||\mathbf{x}-\mathbf{y}||=\lambda\right\rbrace,\]
and 
\[E(RF_\lambda(\mathbb{F}_q^2))=\left\lbrace\left((\mathbf{x}, \mathbf{y}), (\mathbf{z}, \mathbf{w})\right)\in V(RF_\lambda(\mathbb{F}_q^2))\times V(RF_\lambda(\mathbb{F}_q^2))\colon \exists R_\mathbf{u}, ~ R_{\mathbf{u}}(\mathbf{x})=\mathbf{z}, R_{\mathbf{u}}(\mathbf{y})=\mathbf{w}\right\rbrace.\]
Hanson et al. \cite{hanson} established the $(n, d, \lambda)$ form of this graph as follows.
\begin{lemma}[Hanson et al., \cite{hanson}]\label{reflection}
The reflection graph $RF_\lambda(\mathbb{F}_q^2)$ is 
\[\left(q^2(q\pm 1), q^2\pm q, 2(q\pm1)\right) - \mbox{graph}.\]
\end{lemma}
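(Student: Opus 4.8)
The plan is to verify the three parameters of the $(n,d,\lambda)$-graph separately, with all the $\pm$'s governed by the quadratic character $\chi$ of $\mathbb{F}_q$ through the quantity $q-\chi(-1)=q\pm1$ (so the top/bottom signs correspond to $q\equiv 3,1\pmod 4$). For the vertex count, recall that for $\lambda\ne0$ the ``circle'' $S_\lambda=\{\mathbf{e}\colon ||\mathbf{e}||=\lambda\}$ is a nondegenerate conic with exactly $q-\chi(-1)=q\pm1$ points; fixing the base point $\mathbf{x}\in\mathbb{F}_q^2$ and the difference $\mathbf{y}-\mathbf{x}\in S_\lambda$ then gives $n=q^2(q\pm1)$. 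For the regularity, I would fix a rod $(\mathbf{x},\mathbf{y})$ and use $R_{\mathbf{u}}(\mathbf{t})=R\mathbf{t}+(I-R)\mathbf{u}$, so the image rod $(\mathbf{z},\mathbf{w})$ satisfies $\mathbf{z}-\mathbf{w}=R(\mathbf{x}-\mathbf{y})$ and $\mathbf{z}=R\mathbf{x}+(I-R)\mathbf{u}$. Since $\det(I-R)=1-(a^2+b^2)=0$ while $I-R\ne0$, the matrix $I-R$ has rank one, so as $\mathbf{u}$ ranges over $\mathbb{F}_q^2$ the translation part $(I-R)\mathbf{u}$ fills the line $\spn(I-R)$, giving $q$ distinct images per reflection matrix $R$. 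The reflections biject with $S_\lambda$ (each $\mathbf{f}\in S_\lambda$ equals $R\mathbf{e}$ for a unique reflection $R$), so distinct $R$ produce distinct difference vectors $\mathbf{z}-\mathbf{w}$ and hence disjoint neighbor sets; multiplying the $q-\chi(-1)$ choices of $R$ by the $q$ translations yields $d=q(q-\chi(-1))=q^2\pm q$.

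The eigenvalue bound is the heart of the matter, and here I would diagonalize the adjacency operator $A$ by Fourier analysis in the base-point variable. Writing a function on rods as $f(\mathbf{x},\mathbf{e})=\psi(\boldsymbol{\xi}\cdot\mathbf{x})g(\mathbf{e})$ for an additive character $\psi$ and a frequency $\boldsymbol{\xi}\in\mathbb{F}_q^2$, the neighbor description above gives
\[
(Af)(\mathbf{x},\mathbf{e})=\sum_{R}\Big(\sum_{\mathbf{v}\in\spn(I-R)}\psi(\boldsymbol{\xi}\cdot\mathbf{v})\Big)\,\psi(\boldsymbol{\xi}\cdot R\mathbf{x})\,g(R\mathbf{e}).
\]
The inner sum over the line $\spn(I-R)$ equals $q$ when $\boldsymbol{\xi}\perp\spn(I-R)$ and $0$ otherwise, and since $R^{\top}=R$ this orthogonality is exactly the condition $R\boldsymbol{\xi}=\boldsymbol{\xi}$, which also forces $\psi(\boldsymbol{\xi}\cdot R\mathbf{x})=\psi(\boldsymbol{\xi}\cdot\mathbf{x})$. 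Hence each frequency subspace $V_{\boldsymbol{\xi}}=\{\psi(\boldsymbol{\xi}\cdot\mathbf{x})g(\mathbf{e})\}$ is $A$-invariant and the problem collapses to an operator on the $|S_\lambda|$-dimensional $\mathbf{e}$-variable. I would then split into three cases: for $\boldsymbol{\xi}=0$ every reflection contributes and, because the reflections biject with $S_\lambda$, the operator is $q$ times the averaging projector onto constants, giving eigenvalue $d$ once and $0$ otherwise; for anisotropic $\boldsymbol{\xi}\ne0$ exactly one reflection (the one across $\spn(\boldsymbol{\xi})$) fixes $\boldsymbol{\xi}$, so $A$ acts as $q$ times the involution $g\mapsto g\circ R$, with eigenvalues $\pm q$; and for isotropic $\boldsymbol{\xi}\ne0$ (which occurs only when $q\equiv1\pmod4$) no reflection fixes $\boldsymbol{\xi}$, because reflection axes are anisotropic, so $A=0$ there. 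Collecting these, every non-principal eigenvalue lies in $\{0,\pm q\}$, which is comfortably bounded by $2(q\pm1)$; a dimension count and the trace identity $\sum_i\mu_i^2=nd$ confirm the multiplicities and hence the decomposition.

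The hard part will be the bookkeeping that makes this Fourier reduction rigorous and uniform: establishing the bijection between reflections and $S_\lambda$, the equivalence $\boldsymbol{\xi}\perp\spn(I-R)\iff R\boldsymbol{\xi}=\boldsymbol{\xi}$, and the anisotropic/isotropic dichotomy for which directions are fixed, all of which carry the $q\equiv1$ versus $q\equiv3$ case distinctions that produce the $\pm$ signs throughout. If one only wants the stated (non-tight) bound $2(q\pm1)$ rather than the sharp value $q$ that this analysis actually yields, an alternative is the second-moment route: compute the codegrees by noting that a walk of length two is a composition of two reflections, i.e.\ a rotation, and then bound $||A^2-\tfrac{d^2}{n}J||$ on $\mathbf{1}^{\perp}$ from the resulting rotation counts. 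I expect the representation-theoretic approach to be the cleaner of the two, with the only delicate points being precisely those character-of-$(-1)$ case distinctions.
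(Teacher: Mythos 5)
You should first know that the paper contains no proof of this lemma to compare against: it is imported verbatim from Hanson, Lund and Roche-Newton \cite{hanson}, and the present paper only uses it as a black box (in the proof of Proposition \ref{pro2}). So your argument has to stand on its own, and it does --- I checked the delicate points and they all go through. (i) Your bijection between reflection matrices and $S_\lambda$: for $\mathbf{e},\mathbf{f}\in S_\lambda$ the equation $R\mathbf{e}=\mathbf{f}$ is a $2\times 2$ linear system in $(a,b)$ with coefficient determinant $e_1^2+e_2^2=\lambda\neq 0$, and its unique solution automatically satisfies $a^2+b^2=\|\mathbf{f}\|/\|\mathbf{e}\|=1$; this one computation simultaneously justifies the degree count $q(q\pm 1)$, the uniqueness of the reflection fixing an anisotropic $\boldsymbol{\xi}$, and (by inconsistency of the same system when $\|\boldsymbol{\xi}\|=0$) the fact that no reflection fixes an isotropic direction. (ii) Since $R$ is symmetric, $\boldsymbol{\xi}\perp\mathrm{Im}(I-R)$ is indeed equivalent to $(I-R)\boldsymbol{\xi}=0$, which also preserves the frequency, so the spaces $V_{\boldsymbol{\xi}}$ are $A$-invariant and exhaust the whole space. (iii) The resulting spectrum is contained in $\{d,0,\pm q\}$; a trace check confirms this (e.g.\ $d+q(q-1)(q\pm 1)=q^2(q\pm 1)=n$, matching the fact that every vertex carries exactly one loop). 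Thus you actually prove the \emph{sharper} statement that the second eigenvalue equals $q$, which implies the stated bound since $q\le 2(q-1)$ for $q\ge 2$; the constant $2(q\pm 1)$ in the cited lemma is simply not tight, and either bound suffices for the application in Section 4. The only caveat is presentational: your text is a plan rather than a finished proof, so the three ``bookkeeping'' items you flagged would need to be written out --- but as verified above, each is a short computation and none hides a gap.
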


The proof of Proposition \ref{pro2} below is quite similar to that of \cite[Theorem 1 and Lemma 14]{hanson}, but here we give a straight proof and avoid using the bound $\sum_{\lambda\in \mathbb{F}_q^*}\nu_{\mathcal{E}}(\lambda)^2\ll \frac{|\mathcal{E}|^4}{q}+q^2|\mathcal{E}|^2$ which can be proved by employing the distribution of edges between two sets of vertices in the Erd\H{o}s-R\'{e}nyi graph with an appropriated setting.

\begin{proof}[{\bf Proof of Proposition \ref{pro2}}]
We may again assume that there are no two distinct points $\mathbf{p}$ and $\mathbf{q}$ in $\mathcal{E}$ satisfying $||\mathbf{p}-\mathbf{q}||=0$ (as in the proof of Lemma \ref{1172016}).

For any two distinct points $\mathbf{q}_1$ and $\mathbf{q}_2$ in $\mathbb{F}_q^2$, the \textit{bisector line} $l_{\mathbf{q}_1, \mathbf{q}_2}$ is defined as
\[l_{\mathbf{q}_1, \mathbf{q}_2}:=\left\lbrace \mathbf{x}\in \mathbb{F}_q^2\colon ||\mathbf{x}-\mathbf{q}_1||=||\mathbf{x}-\mathbf{q}_2||\right\rbrace.\]

 Let $\L$ be the multi-set of bisector lines defined as \[\L=\bigcup_{(\mathbf{q_1}, \mathbf{q_2})\in \mathcal{E}^2}l_{\mathbf{q_1}, \mathbf{q_2}}.\]
 
  One can identify each point $(a,b)\in \mathcal{E}$ with a vertex $[a,b,1]$ in the Erd\H{o}s-R\'{e}nyi graph $PG(q,3)$, and each line  of the form $cx+dy+e=0$ in $\L$ with a vertex $[c,d,e]$ in the Erd\H{o}s-R\'{e}nyi graph  $\mathcal{ER}(\mathbb{F}_q^3)$. We denote the corresponding sets of vertices in the Erd\H{o}s-R\'{e}nyi graph  $\mathcal{ER}(\mathbb{F}_q^3)$ by $\E'$ and $\L'$, respectively.
  
Hence, we have $|\E'|=|\E|$, $|\L'|=|\L|$, and the sum $\sum_{\lambda\in \mathbb{F}_q^*}H_{\lambda}(\mathcal{E})$ is equal to the number of edges between $\E'$ and $\L'$ in $\mathcal{ER}(\mathbb{F}_q^3)$. Therefore, by Lemmas \ref{edge} and \ref{erdosrenyi} we get 
\[\sum_{\lambda\in \mathbb{F}_q^*}H_{\lambda}(\mathcal{E})=I(\mathcal{E}', \L')\le \frac{|\mathcal{E}||\L|}{q}+q^{1/2}|\mathcal{E}|^{1/2}\sqrt{\sum_{l\in \L}w(l)^2},\]
where $w(l)$ is the multiplicity of $l\in \L$. 

If $l_{\mathbf{x}, \mathbf{z}}=l_{\mathbf{y}, \mathbf{w}}$ and $||\mathbf{x}-\mathbf{z}||\ne 0$, then one can check that there exists a unique reflection $R_\mathbf{u}$ such that $R_\mathbf{u}(\mathbf{x})=\mathbf{z}$, $R_\mathbf{u}(\mathbf{y})=\mathbf{w}$, and $||\mathbf{x}-\mathbf{y}||=||\mathbf{z}-\mathbf{w}||$.  Thus the sum $\sum_{l\in \L}w(l)^2$ is the cardinality of the following set
\[Q:=Q(\E)=\left\lbrace (\mathbf{x}, \mathbf{y}, \mathbf{z}, \mathbf{w})\in \mathcal{E}^4\colon \exists \lambda\in \mathbb{F}_q^*,  \left((\mathbf{x},\mathbf{y}), (\mathbf{z}, \mathbf{w})\right)\in E(RF_\lambda(\mathbb{F}_q^2))\right\rbrace.\]
We set
\[Q_\lambda:=\left\lbrace (\mathbf{x}, \mathbf{y}, \mathbf{z}, \mathbf{w})\in Q\colon ||\mathbf{x}-\mathbf{y}||=||\mathbf{z}-\mathbf{w}||=\lambda\right\rbrace,\]
and see that
\[\sum_{l\in \L}w(l)^2=\sum_{\lambda\in \mathbb{F}_q^*}|Q_\lambda|.\]
For each $\lambda\ne 0$, it follows from Lemma \ref{reflection} and Lemma \ref{edge} that 
\[|Q_\lambda|\le \frac{\nu_\E(\lambda)^2}{q}+2(q-1)\nu_\E(\lambda).\]
Hence, we get
\begin{equation}\label{lund12}\sum_{l\in \L}w(l)^2\le \frac{\sum_{\lambda \in \mathbb{F}_q^*}\nu_{\mathcal{E}}(\lambda)^2}{q}+2(q-1)|\mathcal{E}|^2.\end{equation}
Combining (\ref{lund11}) with (\ref{lund12}), we obtain 
\[
\sum_{\lambda\in \mathbb{F}_q^*}\nu_{\mathcal{E}}(\lambda)^2\le |\E|\left( \frac{|\E||\L|}{q}+q^{1/2}|\E|^{1/2}\sqrt{\frac{\sum_{\lambda\in \mathbb{F}_q^*}\nu_{\mathcal{E}}(\lambda)^2}{q}+2(q-1)|\mathcal{E}|^2}\right).\]
Solving this inequality leads to the desired bound, and the proposition follows.
\end{proof}
\begin{remark}\label{0388}
It follows from the proof of Proposition \ref{pro2} that the number of the number of hinges determined by points in $\E$ is at most 
\[\frac{|\E|^3}{q}+q^{1/2}|\E|^{1/2}\sqrt{\frac{\sum_{\lambda\in \mathbb{F}_q^*}\nu_{\mathcal{E}}(\lambda)^2}{q}+2(q-1)|\mathcal{E}|^2}.\]
On the other hand, we have proved that \[\sum_{\lambda\in \mathbb{F}_q^*}\nu_{\mathcal{E}}(\lambda)^2\ll \frac{|\mathcal{E}|^4}{q}+q|\mathcal{E}|^{5/2}.\]
This leads to that the number of hinges is at most $\ll |\E|^3/q$ when $|\E|\gg q^{4/3}$.
\end{remark}

As an application of Proposition \ref{pro2}, we obtain the following result.
\begin{theorem}[Bennett et al. \cite{groupaction}]
Let $\E$ be a set of points in $\mathbb{F}_q^2$. If $|\E|\gg q^{8/5}$, then $\E$ determines a positive proportion of all triangles.
\end{theorem}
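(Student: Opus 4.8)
The plan is to specialize the argument of Theorem \ref{main} to the diagonal case $d=k=2$ and to feed in the sharper second-moment estimate of Proposition \ref{pro2} in place of Lemma \ref{mainlm}. Write $T(\E):=T_{2,2}(\E)$ for the set of congruence classes of triangles and let $\mu(\mathbb{D})$ count ordered triangles with prescribed side-distance matrix $\mathbb{D}$. As in the proof of Lemma \ref{1172016}, after discarding the $O(q)$ points that force a zero distance (negligible since $|\E|\gg q^{8/5}$), I may assume no two points of $\E$ are at distance $0$. The Cauchy--Schwarz step of Section \ref{hoho} gives
\[
|T(\E)|\ge\frac{\left(\sum_{\mathbb{D}}\mu(\mathbb{D})\right)^2}{\sum_{\mathbb{D}}\mu(\mathbb{D})^2}\ge\frac{|\E|^{6}}{\sum_{\mathbb{D}}\mu(\mathbb{D})^2},
\]
so the whole problem reduces to proving $\sum_{\mathbb{D}}\mu(\mathbb{D})^2\ll|\E|^{6}/q^{3}$ whenever $|\E|\gg q^{8/5}$.

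Next I would invoke the group-action apparatus. For $\theta\in O(2,\mathbb{F}_q)$ and $\mathbf{z}\in\mathbb{F}_q^2$ set $w_\theta(\mathbf{z})=\{(\mathbf{u},\mathbf{v})\in\E^2:\theta(\mathbf{u})+\mathbf{z}=\mathbf{v}\}$. Here $d-k=0$ and $|O(0,\mathbb{F}_q)|=1$, so the bound $s(\mathbb{D})\le|O(d-k,\mathbb{F}_q)|$ forces $s(\mathbb{D})=1$ and inequality (\ref{hai}) becomes $\sum_{\mathbb{D}}\mu(\mathbb{D})^2=\sum_{\mathbb{D}}s(\mathbb{D})\mu(\mathbb{D})^2\le\sum_{\theta,\mathbf{z}}|w_\theta(\mathbf{z})|^{3}$. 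Applying Lemma \ref{bodebennet} with $f=|w_\theta(\cdot)|$, $n=3$, $\|f\|_1=|\E|^2$, $\|f\|_\infty\le|\E|$ and $|V|=q^2$, and then summing over $\theta$, yields the $d=k=2$ analogue of (\ref{mot*}),
\[
\sum_{\theta,\mathbf{z}}|w_\theta(\mathbf{z})|^{3}\le|O(2,\mathbb{F}_q)|\frac{|\E|^{6}}{q^{4}}+3|\E|\left(\sum_{\theta,\mathbf{z}}|w_\theta(\mathbf{z})|^2-\frac{|O(2,\mathbb{F}_q)||\E|^4}{q^2}\right).
\]
Since $|O(2,\mathbb{F}_q)|=2(1+o(1))q$, the first term is $(2+o(1))|\E|^6/q^3$, already of the right size; the entire difficulty is to show the bracketed variance term is $\ll|\E|^5/q^3$.

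For the variance I would repeat the quadruple count of Section \ref{hoho}: two pairs lying in a common $w_\theta(\mathbf{z})$ produce $(\mathbf{a},\mathbf{b},\mathbf{c},\mathbf{d})\in\E^4$ with $\|\mathbf{a}-\mathbf{b}\|=\|\mathbf{c}-\mathbf{d}\|$ and $\theta(\mathbf{a}-\mathbf{b})=\mathbf{c}-\mathbf{d}$. As no distance is $0$, every such $\mathbf{a}-\mathbf{b}$ is anisotropic, with stabilizer of size $|O(1,\mathbb{F}_q)|=2$ in $O(2,\mathbb{F}_q)$, so the analogue of (\ref{hai*}) reads
\[
\sum_{\theta,\mathbf{z}}|w_\theta(\mathbf{z})|^2\le\frac{|O(1,\mathbb{F}_q)|}{2}\sum_{t\in\mathbb{F}_q}\nu_\E(t)^2+|O(2,\mathbb{F}_q)||\E|^2.
\]
The decisive point is the cancellation of the main term. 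Writing $\sum_t\nu_\E(t)^2=|\E|^2+\sum_{\lambda\ne0}\nu_\E(\lambda)^2$ and inserting Proposition \ref{pro2} in the form $\sum_{\lambda\ne0}\nu_\E(\lambda)^2=|\E|^4/q+O(q|\E|^{5/2})$, the leading contribution $\tfrac{|O(1,\mathbb{F}_q)|}{2}\cdot|\E|^4/q=|\E|^4/q$ is at most $|O(2,\mathbb{F}_q)||\E|^4/q^2=(2+o(1))|\E|^4/q$, exactly the inequality $|O(d-1)|\tfrac{|\E|^4}{2q}\le|O(d)|\tfrac{|\E|^4}{q^d}$ used in Section \ref{hoho}; hence it is annihilated by the subtracted term and only the fluctuation survives, leaving the variance $\ll q|\E|^{5/2}$.

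Assembling the pieces gives $\sum_{\mathbb{D}}\mu(\mathbb{D})^2\ll|\E|^6/q^3+q|\E|^{7/2}$, and the second term is dominated by the first precisely when $q^4\le|\E|^{5/2}$, i.e. when $|\E|\gg q^{8/5}$; this is exactly where the exponent $8/5$ is forced. With the Cauchy--Schwarz lower bound this yields $|T(\E)|\gg q^3$, a positive proportion of all $q^{\binom{3}{2}}=q^3$ triangles. I expect the main obstacle to be the main-term cancellation in the variance: one must use Proposition \ref{pro2} with its leading coefficient pinned down rather than as a bare $\ll$ bound, for otherwise a residual $|\E|^4/q$ would propagate through the factor $3|\E|$ to $|\E|^5/q$ and force the far stronger hypothesis $|\E|\gg q^{2}$. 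Verifying the zero-distance reduction and the anisotropic stabilizer count uniformly in $q$ (independently of $q\bmod 4$) is the remaining technical point.
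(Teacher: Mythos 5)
Your proposal is correct and follows essentially the same route as the paper: the paper's own proof of this theorem is precisely the argument of Theorem \ref{main} specialized to $d=k=2$ with Proposition \ref{pro2} substituted for Lemma \ref{mainlm}, which is exactly what you have carried out in detail, including the key cancellation of the $|\E|^4/q$ term against the subtracted mean. Your closing worry about needing the leading coefficient of Proposition \ref{pro2} pinned down is well taken, but note there is a factor-of-two cushion: the coefficient $\tfrac{|O(1,\mathbb{F}_q)|}{2}=1$ on $\sum_{t}\nu_{\E}(t)^2$ in the analogue of (\ref{hai*}) sits against the subtracted term $|O(2,\mathbb{F}_q)||\E|^4/q^2=(2+o(1))|\E|^4/q$, so a bound of the shape $\sum_{\lambda\neq 0}\nu_{\E}(\lambda)^2\le (1+o(1))|\E|^4/q+O(q|\E|^{5/2})$ --- which solving the self-referential inequality in the proof of Proposition \ref{pro2} does deliver --- already suffices, the $o(1)$ residual being absorbed by the negative slack.
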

\begin{proof}
Since $|\E|\gg q^{8/5}$, without loss of generality, we assume that there are no two points $\mathbf{x}$ and $\mathbf{y}$ in $\E$ satisfying $||\mathbf{x}-\mathbf{y}||=0$. The proof of Theorem \ref{tamgiac} is very similar to that of Theorem \ref{main}, and there is the only one different step. That is, instead of using Lemma \ref{mainlm}, we use Proposition \ref{pro2}, thus we leave the rest to the reader.
\end{proof}
\section{Proof of Theorem \ref{khoangcachphanbiet}}
In order to prove Theorem \ref{khoangcachphanbiet}, we make use of the following theorem on the cardinality of a maximal independent set of a hypergraph due to Spencer \cite{spencer}.
\begin{theorem}\label{spencer}
Let $H$ be a $k$-uniform hypergraph of $n$ vertices and $m$ edges with $ m \ge n/k$, and let $\alpha(H)$ denote the independence number of $H$. Then 
$$ \alpha(H) \ge \left(1-\frac{1}{k}\right) \left \lfloor \left(\frac{n^k}{km}\right)^{\frac{1}{k-1}}\right \rfloor.$$
\end{theorem}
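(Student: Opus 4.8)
The plan is to prove this Turán-type bound by the probabilistic deletion method, sampling a vertex subset of a carefully chosen \emph{fixed} integer size so that the floor in the statement emerges exactly. First I would set $p:=\left(\frac{n}{km}\right)^{1/(k-1)}$ and observe that the hypothesis $m\ge n/k$ is precisely what guarantees $p\le 1$; a short computation also gives $pn=\left(\frac{n^k}{km}\right)^{1/(k-1)}$ together with the identity $mp^{k-1}=n/k$, which will drive the final cancellation. Writing $t:=\lfloor pn\rfloor=\left\lfloor\left(\frac{n^k}{km}\right)^{1/(k-1)}\right\rfloor$, I may assume $t\ge 1$, since otherwise the asserted lower bound on $\alpha(H)$ is $0$ and there is nothing to prove.

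Next I would choose a subset $S$ of the vertex set uniformly at random among all $t$-element subsets and count the edges of $H$ lying entirely inside $S$. Since each fixed $k$-edge is contained in $S$ with probability $\binom{t}{k}/\binom{n}{k}$, linearity of expectation gives the expected number of such edges as $m\binom{t}{k}/\binom{n}{k}$. From $S$ I then form an independent set $S'$ by deleting one vertex from each edge contained in $S$; this destroys every edge, so $S'$ is independent and $|S'|\ge |S|-(\text{number of edges inside }S)$. Taking expectations, there exists an outcome with $|S'|\ge t-m\binom{t}{k}/\binom{n}{k}$, and hence $\alpha(H)\ge t-m\binom{t}{k}/\binom{n}{k}$.

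It then remains to bound the correction term. I would use the elementary inequality $\binom{t}{k}/\binom{n}{k}=\prod_{i=0}^{k-1}\frac{t-i}{n-i}\le (t/n)^k$, valid because each factor satisfies $\frac{t-i}{n-i}\le \frac{t}{n}$ when $t\le n$, to obtain $m\binom{t}{k}/\binom{n}{k}\le m(t/n)^k$. Factoring out one power of $t/n$ and using $t/n\le p$ together with $mp^{k-1}=n/k$ yields $m(t/n)^k\le mp^{k-1}\cdot(t/n)=(n/k)(t/n)=t/k$. Substituting back gives $\alpha(H)\ge t-t/k=(1-1/k)t$, which is exactly the claimed bound; note that when $t<k$ the binomial $\binom{t}{k}$ vanishes and the same chain holds trivially.

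I expect the only genuine subtlety to be the choice of sampling model. Including each vertex independently with probability $p$ produces the continuous bound $(1-1/k)pn$ but not the floor; the floor appears only because I sample a uniform random $t$-subset and exploit the ratio $\binom{t}{k}/\binom{n}{k}$ rather than $p^k$. The clean emergence of the factor $(1-1/k)$ hinges on the interplay of the inequality $\binom{t}{k}/\binom{n}{k}\le (t/n)^k$ with the cancellation $mp^{k-1}=n/k$. Verifying $p\le 1$, which is exactly where the hypothesis $m\ge n/k$ is used, and disposing of the degenerate cases $t=0$ and $t<k$ are the remaining routine checks.
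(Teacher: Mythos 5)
Your proof is correct, but note that the paper itself offers no proof of this statement: Theorem \ref{spencer} is imported verbatim from Spencer \cite{spencer}, so there is no internal argument to compare against. Your fixed-size-sample deletion argument is essentially Spencer's original route (and the reason the floor appears in the bound, which the more common independent-sampling version of the deletion method would not produce): the key chain $\binom{t}{k}/\binom{n}{k}\le (t/n)^k\le p^{k-1}(t/n)$ combined with $mp^{k-1}=n/k$ gives exactly $\alpha(H)\ge t-t/k=(1-1/k)\lfloor (n^k/(km))^{1/(k-1)}\rfloor$, and you correctly dispose of the needed checks --- $p\le 1$ from $m\ge n/k$ (hence $t\le n$, validating $\frac{t-i}{n-i}\le \frac{t}{n}$), the trivial cases $t=0$ and $t<k$, and the fact that deleting one vertex per internal edge leaves an independent set of size at least $t$ minus the edge count.
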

\begin{proof}[{\bf Proof of Theorem \ref{khoangcachphanbiet}}] 

We call a $4$-tuple of distinct elements in $\E^4$ \textit{regular} if all six generalized distances determined are distinct. Otherwise, it is called \textit{singular}. Let $H$ be the $4$-uniform hypergraph on the vertex set $V(H)=\E$, whose edges are the singular $4$-tuples of $\E$. 

On one hand, it follows from the remark (\ref{0388}) that the number of $4$-tuples containing a triple induced a hinge  is at most $((1+o(1))|\E|^3/q)\cdot |\E|=(1+o(1))|\E|^4/q$ when $|\E|\gg q^{4/3}$. Thus the number of edges of $H$ containing a triple induced a hinge is at most $(1+o(1))|\E|^4/q$

On the other hand, according to Proposition \ref{pro2} and from the fact that the number of quadruples with zero-distances is no more than $4q^4$, the number of edges of $H$ that do not contain any hinge is a most $(1+o(1))\frac{|\E|^4}{q}$, when $|\E|\gg q^{4/3}$.

In other words, if $|\E|\gg q^{4/3}$, we have  $$|E(H)|\le (1+o(1))\frac{|\E|^4}{q}.$$
By Theorem \ref{spencer}, one has $$\alpha(H)\ge C \left \lgroup\frac{|\E|^4}{|E(H)|}\right\rgroup^{1/3}= Cq^{1/3},$$
for some positive constant $C$.
Since there is no repeated distance determined by the independent set of $H$,  there exists a distinct distance subset $U\subseteq \E$ satisfying $|U|\ge \alpha(H)\ge Cq^{1/3}$.

Moreover, it is easy to see that there is at least one repeated distance determined by any set of  $\sqrt{2}q^{1/2}+1$ elements  since there are only $q=|\mathbb{F}_q|$ distances over $\mathbb{F}_q^2$. This concludes the proof of the theorem.
\end{proof}

\section{Acknowledgments}
The second listed author was partially supported by Swiss National Science Foundation grants 200020-162884 and 200020-144531.

\vspace{1cc}
\hfill\\
University of Education, \\
Vietnam National University\\
E-mail: phamduchiepk6@gmail.com
\bigskip\\
Department of Mathematics,\\
EPF Lausanne\\
Switzerland\\
E-mail: thang.pham@epfl.ch\\
\bigskip\\
University of Education, \\
Vietnam National University\\
Viet Nam\\
E-mail: vinhla@vnu.edu.vn

\end{document}